\newtheorem{thm}{\textsc{Theorem}}[section]
\newtheorem*{thma}{\textsc{Theorem A}}
\newtheorem*{thmb}{\textsc{Theorem B}}
\newtheorem{lem}{\textsc{Lemma}}[section]
\newtheorem{cor}{\textsc{Corollary}}[section]
\def\QED{$\Box$}
\def\mbi#1{\boldsymbol{#1}} 
\def\Pic{\mathop{\mathrm{Pic}}\nolimits}
\def\TPic#1{\mathop{(\mathrm{Pic}_{#1})_{\rm tor}}\nolimits}
\def\rk{\mathop{\mathrm{rank}}\nolimits}
\def\sign{\mathop{\mathrm{sgn}}\nolimits}
\def\WP{\mathbb{WP}}
\def\CC{\mathbb{C}}
\newtheorem{remark}{\textsc{Remark}}
\theoremstyle{definition}
\newtheorem{defn}{\textsc{Definition}}[section]
\begin{document}
\title{Mirror constructions for K3 surfaces from bimodal singularities}
\author{Makiko Mase and Ursula Whitcher \thanks{The second author would like to thank the Isaac Newton Institute for Mathematical Sciences, Cambridge, for support and hospitality during the K-theory, algebraic cycles and motivic homotopy theory programme, where work on this paper was undertaken. This work was supported by EPSRC grant no EP/R014604/1. Both authors are indebted to the anonymous referees for helpful comments that improved the readability of this paper.}}
\maketitle

\section{Introduction}\label{S:Intro}

Given a K3 surface realized as a hypersurface in a weighted projective space or a Gorenstein Fano toric variety, one may construct a mirror K3 surface in various ways.  Depending on the precise model, available descriptions of mirror symmetry include the Greene--Plesser mirror, the Berglund-H\"{u}bsch transpose construction for invertible polynomials, 
Dolgachev-Nikulin-Pinkham's lattice-polarized K3 surfaces, 
and Batyrev's reflexive polytope construction.  The multitude of descriptions raises the question of whether mirror constructions are consistent.  Comparing different mirror constructions often entails making choices---one might need to specify a family containing a K3 surface or a lattice polarization, for example---and thus it is important to establish systematic methods for making these choices.

Ebeling and Ploog studied invertible polynomials in three variables obtained from Arnold's classification of bimodal singularities in \cite{EP}; we list these polynomials in Table~\ref{Ta:bimodal}.  

\begin{table}[ht]
\centering
\begin{tabular}{|c|c|c|c|}
\hline
Name &  $f$   & $f^T$ &  Dual \\
\hline
$J_{3,0}$ & $x^6+xy^3+z^2$ & $x^6y+y^3+z^2$ & $Z_{13}$ \\
$Z_{1,0}$ & $x^5y + xy^3 +z^2$ & $x^5y + xy^3 +z^2$ & $Z_{1,0}$ \\
$Q_{2,0}$  & $x^4z + xy^3 +z^2$ &  $x^4y + y^3 + xz^2$& $Z_{17}$\\
$W_{1,0}$  & $x^6+y^2z+z^2$ & $x^6+y^2+yz^2$  & $W_{1,0}$ \\
$S_{1,0}$  & $x^5y+y^2z+z^2$ & $x^5+xy^2+yz^2$ & $W_{17}$ \\
$U_{1,0}$  & $x^3y+y^2z+z^3$ &  $x^3+xy^2+yz^3$  & $U_{1,0}$ \\
\hline
$E_{18}$  & $x^5 + y^3 + xz^2$ & $x^5z + y^3 + z^2$  & $Q_{12}$ \\
$E_{19}$  & $x^7 + xy^3+z^2$  & $x^7y + y^3+z^2$  & $Z_{1,0}$  \\
$E_{20}$ & $x^{11} +y^3 +z^2$ &  $x^{11} +y^3 +z^2$ & $E_{20}$ \\
\hline
$Z_{18}$  & $x^6y + xy^3 + z^2$ &  $x^6y + xy^3 + z^2$ & $Z_{18}$  \\
$Z_{19}$   & $x^9y + y^3 + z^2$ & $x^9 + xy^3 + z^2$ & $E_{25}$\\
\hline
$Q_{16}$  & $x^4z + y^3 + xz^2$ &  $x^4z + y^3 + xz^2$ & $Q_{16}$ \\
$Q_{17}$ & $x^5z+xy^3+z^2$  & $x^5y+y^3+xz^2$  & $Z_{2,0}$  \\
$Q_{18}$  & $x^8z+y^3+z^2$ & $x^8+y^3+xz^2$ & $E_{30}$  \\
\hline
$W_{17}$ & $x^5+xz^2+y^2z$ &  $x^5z+yz^2+y^2$  & $S_{1,0}$  \\
$W_{18}$ & $x^7+ y^2z+ z^2$ &  $x^7+ y^2+ yz^2$ & $W_{18}$  \\
\hline
$S_{16}$ & $x^4y+xz^2+y^2z$ &  $x^4y+xz^2+y^2z$  & $S_{16}$  \\
$S_{17}$ & $x^6y+y^2z+z^2$ &  $x^6+xy^2+yz^2$ & $X_{2,0}$  \\
\hline
$U_{16}$  & $x^5+y^2z+yz^2$ & $x^5+y^2z+yz^2$ & $U_{16}$  \\
\hline
\end{tabular}
\caption{Strange duality of the bimodal singularities} \label{Ta:bimodal}
\end{table}

In a series of papers, the first author and her collaborators have compared mirror constructions for K3 surfaces obtained by extending the bimodal singularity polynomials to an invertible polynomial in four variables.  In \cite{MU}, they observed there is an extension to an invertible polynomial defining a K3 surface in weighted projective space for all but 4 examples, showed Berglund-H\"{u}bsch duality for these K3 surface invertible polynomials can be viewed as a special case of Batyrev mirror symmetry, and commented on potential relationships with homological mirror symmetry. These observations are consistent with broader efforts to unify Berglund-H\"{u}bsch and Batyrev mirror symmetry, such as the construction of Clarke in \cite{Clarke}; the relationship between Berglund-H\"{u}bsch, Batyrev, and homological mirror symmetry is treated in depth in \cite{FK} and \cite{DFK}.

In \cite{Mase2016a, Mase2016b, Mase2017}, the first author studied 
Dolgachev-Nikulin-Pinkham's
mirror symmetry construction for K3 surfaces obtained from bimodal singularities using an invertible polynomial in four variables.  Let $\Lambda_{K3} \cong U \oplus U \oplus U \oplus E_8 \oplus E_8$ be the K3 lattice, the unique (up to isomorphism) unimodular lattice of signature $(3,19)$; here $U$ is the unimodular lattice of signature $(1,1)$ and we take $E_8$ to be negative definite.  According to \cite{Dolgachev}, if two K3 surfaces $X$ and $\check{X}$ are mirror, there should exist lattice polarizations $M \hookrightarrow H^2(X,\mathbb{Z})$ and $\check{M} \hookrightarrow H^2(\check{X},\mathbb{Z})$  such that $M^\perp \cong \check{M} \oplus nU$, where $n$ is a positive integer and the perpendicular complement is computed using the isomorphism $H^2(X,\mathbb{Z}) \cong \Lambda_{K3}$.  In particular, $\rk M + \rk \check{M}$ must be equal to $20$.  Given a reflexive polytope $\Delta$, we may obtain a K3 surface $X$ as a hypersurface in the toric variety $\mathbb{P}_{\!\Delta}$ obtained by an appropriate resolution of singularities of the normal fan of $\Delta$.  In this case, the obvious lattice polarization to choose is $\Pic_\Delta$, the lattice generated by intersecting $X$ with the divisors of $\mathbb{P}_{\!\Delta}$.  The main results of \cite{Mase2016a, Mase2016b, Mase2017} identify polar dual pairs of reflexive polytopes $\Delta$ and $\Delta^\circ$ and associated K3 hypersurfaces in Gorenstein Fano toric varieties which satisfy 
Dolgachev-Nikulin-Pinkham-style mirror symmetry using the lattices $\Pic_\Delta$ and $\Pic_{\Delta^\circ}$ for all but five of the bimodal singularity pairs studied in \cite{MU}. 

Our first Main Theorem shows that the remaining five pairs of bimodal singularity polynomials cannot be extended to invertible polynomials whose Newton polytopes are polar dual reflexive polytopes $\Delta$ and $\Delta^\circ$ such that $\Pic_\Delta$ and $\Pic_{\Delta^\circ}$ yield mirror lattice polarizations in the sense of Dolgachev-Nikulin-Pinkham. 

\begin{thma}\label{T:main}
For each of the bimodal singularity mirror pairs $(B',\, B)$ being
\[
(Z_{13},\, J_{3,0}),\, 
(X_{2,0},\, S_{17}),\, 
(W_{18},\, W_{18}),\, 
(W_{17},\, S_{1,0}),\, 
(U_{16},\, U_{16}),\,
\]
let $f$ be the defining equation of $B$. 
Then, there does not exist an invertible deformation $F$ of $f$ such that the Newton polytope of $F$ is a reflexive polytope $\Delta$ and $\rk \Pic_\Delta + \rk \Pic_{\Delta^\circ}=20$. 
\end{thma}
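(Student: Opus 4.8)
The plan is to prove non-existence by a finite case analysis that exploits the rigidity of invertible polynomials. Each of the five $f$ is itself an invertible polynomial in the three variables $x,y,z$, so any invertible deformation $F$ in four variables is obtained by adjoining a new variable $w$ and incorporating it into the atomic structure of $f$ (possibly multiplying existing monomials by powers of $w$) so that the resulting $4\times 4$ exponent matrix stays invertible. By the Kreuzer--Skarke classification of invertible polynomials as Thom--Sebastiani sums of Fermat, chain, and loop atoms, there are only finitely many ways to graft $w$ onto the atoms of $f$ (as a Fermat summand $w^{a}$, or by extending a chain or loop through $w$). Imposing quasi-homogeneity together with the Calabi--Yau condition $d=\sum_i w_i$ then pins down a single weight system $(w_1,w_2,w_3,w_4;d)$ for each combinatorial type, hence a finite explicit list of candidate polynomials $F$. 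First I would write out this list for each of the five $f$.

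For each candidate I would decide whether the Newton polytope $\Delta$ --- a $3$-simplex, since $F$ has four monomials with invertible exponent matrix --- is reflexive. The key reduction here is that $\Delta$ is reflexive exactly when $\mathbb{P}(w_1,w_2,w_3,w_4)$ is Gorenstein Fano, i.e.\ when $w_i \mid d$ for every $i$; this is a purely arithmetic test on the weight system produced in the previous step. I expect a large fraction of the candidates---and for some of the five singularities, \emph{all} of them---to fail this divisibility condition and to be discarded immediately, since the weight systems forced by these particular bimodal polynomials tend to contain a weight that does not divide $d$.

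For the candidates that survive as reflexive simplices, I would compute $\rk\Pic_\Delta$ and $\rk\Pic_{\Delta^\circ}$ using Batyrev's Picard-number formula for a toric K3 hypersurface (as used in \cite{Mase2016a}), which expresses each rank through counts of lattice points in the faces of $\Delta^\circ$ and of their dual faces in $\Delta$; note that only the ranks are needed, not the full lattices. Because the polar dual of the Newton simplex of an invertible polynomial is, under the natural lattice identification, the Newton simplex of the transpose invertible polynomial $F^{T}$---whose three-variable restriction is the transpose $f^{T}$ tabulated in Table~\ref{Ta:bimodal}---the second rank $\rk\Pic_{\Delta^\circ}$ is the Picard number of the K3 attached to $F^{T}$, so both quantities reduce to explicit lattice-point enumerations on two tetrahedra. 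The object of this step is to verify that the sum $\rk\Pic_\Delta+\rk\Pic_{\Delta^\circ}$ never equals $20$.

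The main obstacle is twofold and lies in these last two steps. First, one must be certain the enumeration of invertible deformations is genuinely complete, including the less obvious chain- and loop-type graftings of $w$, so that ``no reflexive candidate'' is a proof rather than an oversight; the atomic classification is exactly what makes this rigorous. Second, for whatever reflexive candidates do occur, the Picard-rank bookkeeping is delicate: the edge-correction terms $\ell^{*}(E)\,\ell^{*}(E^{*})$ in Batyrev's formula are precisely what can push the sum away from $20$, and the content of the theorem is that the specific lattice-point geometry of these five weight systems forces $\rk\Pic_\Delta+\rk\Pic_{\Delta^\circ}\neq 20$. I would organize the conclusion as a table recording, for each of the five singularities, the finitely many weight systems, the outcome of the reflexivity test, and where applicable the two Picard ranks, so that the failure of the Dolgachev--Nikulin--Pinkham condition can be read off case by case.
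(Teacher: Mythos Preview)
Your overall strategy---enumerate the finitely many invertible extensions of $f$, test reflexivity of the Newton simplex, and for the survivors compute the Picard ranks---is exactly the paper's approach. But the shortcut you propose for the reflexivity test is wrong, and this breaks the argument.

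You claim that the Newton simplex $\Delta$ of $F$ is reflexive if and only if the weighted projective space $\WP(w_0,\dots,w_3)$ is Gorenstein, i.e.\ $w_i\mid d$ for all $i$. That criterion governs the \emph{full} anticanonical polytope of $\WP(w_0,\dots,w_3)$, not the four-vertex simplex spanned by the monomials of an invertible $F$ in the lattice $\mathbf{M}$ intrinsic to $F$. These are different polytopes, and their reflexivity is not equivalent. Concretely: for $J_{3,0}$ the extension $F=W^{18}+X^6+XY^3+Z^2$ has weight system $(1,3,5,9;18)$ with $5\nmid 18$, so your test declares $\Delta$ non-reflexive; but the paper computes the polar dual explicitly and finds that $\Delta$ \emph{is} reflexive (it is polytope 745 in the Kreuzer--Skarke list). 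Likewise for $U_{16}$ with $F=W^6X+X^5+Y^2Z+YZ^2$, weights $(2,3,5,5;15)$ with $2\nmid 15$, the Newton simplex is again reflexive (index 1). Using your criterion you would incorrectly dispose of these cases as ``no reflexive candidate,'' when in fact the substance of the theorem for $J_{3,0}$, $W_{18}$ case~(ii), and $U_{16}$ is that a reflexive $\Delta$ \emph{does} exist and one must then verify that the toric correction term $\sum_\Gamma \ell^*(\Gamma)\ell^*(\Gamma^\circ)$ is nonzero, forcing $\rk\Pic_\Delta+\rk\Pic_{\Delta^\circ}>20$. The paper does exactly this edge-by-edge computation; you cannot avoid it.

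A secondary issue: the assertion that $\Delta^\circ$ is, under the natural identification, the Newton simplex of $F^T$ is also false in general. In the $J_{3,0}$ case the Newton polytope of $F^T$ is only a proper subpolytope of $\Delta^\circ$, and in the $W_{18}$ case~(ii) the paper shows that the Newton polytope of $F^T$ is not even isomorphic to any sub-polytope of $\Delta^\circ$. This does not affect the rank computation (Batyrev's formula uses only $\Delta$ and $\Delta^\circ$), but you should drop the claim; it misstates the relationship between Berglund--H\"ubsch and polar duality.
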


\begin{remark}
Let $\Delta$ be a reflexive polytope obtained as the Newton polytope of a four-variable invertible deformation of one of the bimodal singularities listed in Theorem~A, let $\mathbb{P}_{\!\Delta}$ be the smooth toric variety determined by a maximal projective subdivision of the normal fan of $\Delta$, and let $X$ be a regular K3 hypersurface in $\mathbb{P}_{\!\Delta}$. 
The map 
\[
r: H^{1,1}(\mathbb{P}_{\!\Delta}) \to H^{1,1}({X})
\]
is a natural restriction of the Hodge $(1,1)$-components. 

The rank of the cokernel of $r$ is known to be bounded by a \emph{toric correction term} representing divisors of $\mathbb{P}_{\!\Delta}$ whose intersection with $X$ has multiple components:
\[
\mathrm{coker}(r) \geq \sum_\Gamma \ell^*(\Gamma)\ell^*(\Gamma^\circ),
\]
where the sum is over all edges in $\Delta$, $\ell^*(\Gamma)$ represents the number of lattice points in the relative interior of an edge, and $\Gamma^\circ$ is the polar dual of $\Gamma$. 
By a direct computation, in the course of the proof of the Main Theorem we will show that the toric correction term is nonzero for all $\Delta$ associated to bimodal singularities from the Main Theorem. 
Therefore the main theorem can be rephrased as the statement that we cannot choose a reflexive Newton polytope for the pairs 
\[
(Z_{13},\, J_{3,0}),\, 
(X_{2,0},\, S_{17}),\, 
(W_{18},\, W_{18}),\, 
(W_{17},\, S_{1,0}),\, 
(U_{16},\, U_{16}),\,
\]
such that the natural restriction map $r$ of the Hodge $(1,1)$-components is surjective. 
\end{remark}

We can relax the requirements of Theorem~A in two ways.  First, it might happen that the Newton polytope associated to an invertible polynomial is simply not reflexive.  In such cases, one may include the Newton polytope in a larger reflexive polytope; the first author has pursued this strategy in past work, including~\cite{MU}.  A reflexive Newton polytope may also admit inclusion in a larger reflexive polytope, yielding a different polarizing lattice.

Alternatively, we may fix our choice of reflexive polytope but choose the polarizing lattice more carefully.  This strategy has previously been pursued in order to resolve the apparent contradiction between the Dolgachev-Nikulin-Pinkham
and Batyrev mirror symmetry constructions for K3 hypersurfaces in toric varieties.  A proposal introduced by Rohsiepe in the preprint \cite{Rohsiepe} and reviewed in \cite{WhitcherSurvey} describes an appropriate choice of polarizing lattices for K3 surfaces realized as hypersurfaces in Gorenstein Fano toric varieties obtained from reflexive polytopes.  The idea is to consider a sublattice $\TPic{\Delta}$ of $\Pic_\Delta$ given by the so-called \emph{toric divisors}; these are the divisors given by the pullback of the divisors of the ambient toric variety. The \emph{toric correction term} measures the difference between the rank of $\Pic_\Delta$ and the rank of $\TPic{\Delta}$. As we will discuss in Section~\ref{S:mirror}, the toric correction term is symmetric, so it also measures the difference between the rank of $\Pic_{\Delta^\circ}$ and the rank of $\TPic{\Delta^\circ}$.

%

\begin{thm}[\cite{Rohsiepe}]\label{T:Rohsiepe}
Let $\Delta$ and $\Delta^\circ$ be polar dual three-dimensional reflexive polytopes.  Then $(\Pic_\Delta)^\perp \cong \TPic{\Delta^\circ}\oplus U$ and $(\TPic{\Delta})^\perp \cong \Pic_{\Delta^\circ} \oplus U$.
\end{thm}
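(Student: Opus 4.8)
The plan is to identify $(\Pic_\Delta)^\perp$ with the transcendental lattice of a generic anticanonical K3 surface $X$ in $\mathbb{P}_{\!\Delta}$ and then to pin it down using Nikulin's theory of even lattices. Since $\Lambda_{K3}$ is unimodular and $\Pic_\Delta$ sits inside it primitively as the N\'eron--Severi lattice of a generic such $X$, the orthogonal complement $(\Pic_\Delta)^\perp$ is an even lattice of signature $(2,\,20-\rk\Pic_\Delta)$ whose discriminant form is $-q_{\Pic_\Delta}$. On the other side, because $U$ is unimodular, $\TPic{\Delta^\circ}\oplus U$ is even of signature $(2,\,\rk\TPic{\Delta^\circ})$ with discriminant form $q_{\TPic{\Delta^\circ}}$. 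Both lattices are indefinite, so by Nikulin's uniqueness theorem it suffices to match three invariants: the rank, the signature, and the discriminant form, the last amounting to $q_{\TPic{\Delta^\circ}}\cong -q_{\Pic_\Delta}$.

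The rank and signature comparisons both reduce to the single identity $\rk\Pic_\Delta + \rk\TPic{\Delta^\circ}=20$. Starting from the standard Batyrev count
\[
\rk\TPic{\Delta} = \ell(\Delta^\circ) - 4 - \sum_{\Theta^\circ}\ell^*(\Theta^\circ),
\]
where $\Theta^\circ$ ranges over the facets of $\Delta^\circ$, together with the relation $\rk\Pic_\Delta = \rk\TPic{\Delta} + \sum_{\Gamma}\ell^*(\Gamma)\ell^*(\Gamma^\circ)$ recording the toric correction term, I would expand $\rk\Pic_\Delta + \rk\TPic{\Delta^\circ}$. Substituting the face decomposition $\ell(\Delta) = 1 + \sum_{\text{vertices}}1 + \sum_{\Gamma}\ell^*(\Gamma) + \sum_{\Theta}\ell^*(\Theta)$ for both polytopes makes the facet-interior contributions cancel; invoking the polar duality of face numbers and Euler's relation $V - E + F = 2$ for the polytope boundary then collapses the expression to
\[
\sum_{\Gamma}\bigl(1 + \ell^*(\Gamma)\bigr)\bigl(1 + \ell^*(\Gamma^\circ)\bigr),
\]
the sum over edges $\Gamma$ of $\Delta$ paired with their polar duals. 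For three-dimensional reflexive polytopes this quantity always equals $24$, the combinatorial shadow of $\chi(\mathrm{K3})=24$, which delivers the value $20$. The signatures then agree because both $\Pic_\Delta$ and $\TPic{\Delta^\circ}$ are hyperbolic of signature $(1,\rk-1)$ by the Hodge index theorem, while $\Lambda_{K3}$ has signature $(3,19)$.

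The main obstacle is the discriminant-form identity $q_{\TPic{\Delta^\circ}}\cong -q_{\Pic_\Delta}$. The naive Batyrev expectation $q_{\Pic_{\Delta^\circ}}\cong -q_{\Pic_\Delta}$ fails exactly when the correction term is nonzero, and the whole point of the theorem is that replacing $\Pic_{\Delta^\circ}$ by its toric sublattice $\TPic{\Delta^\circ}$ repairs the discrepancy. I would compute both discriminant groups directly from the toric data: the discriminant of $\Pic_\Delta$ is read from the gluing of the divisor classes meeting $X$, while that of $\TPic{\Delta^\circ}$ is governed by the linear relations among the toric divisors of $\mathbb{P}_{\!\Delta^\circ}$. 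The crux is to match these two finite quadratic forms through the pairing between the relative-interior lattice points of an edge $\Gamma$ and those of its dual edge $\Gamma^\circ$ --- precisely the pairing that produces the correction term. Checking that this combinatorial pairing induces an \emph{anti}-isometry of discriminant forms is the delicate part, since it is here that the asymmetry between $\Pic$ and $\TPic{}$ becomes essential and the bookkeeping of the non-toric divisor classes must be carried out with care.

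Finally, the second isomorphism $(\TPic{\Delta})^\perp\cong\Pic_{\Delta^\circ}\oplus U$ needs no independent argument. Applying the involution $\Delta\leftrightarrow\Delta^\circ$ to the first isomorphism yields $(\Pic_{\Delta^\circ})^\perp\cong\TPic{\Delta}\oplus U$, and since both the rank identity and the discriminant-form anti-isometry are symmetric under interchanging the two polytopes, Nikulin's criterion produces the dual statement at once.
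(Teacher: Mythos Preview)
The paper does not prove this statement; it is quoted from Rohsiepe's preprint, and the surrounding text (``a computation described in \cite{Rohsiepe}'') indicates that Rohsiepe's argument was an exhaustive computer verification across the $4319$ isomorphism classes of three-dimensional reflexive polytopes, not a uniform conceptual argument. Your proposal is therefore different in kind: you attempt a case-free proof via Nikulin's genus theory.

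Your rank computation is correct and nicely organized --- the reduction to the identity $\sum_\Gamma(1+\ell^*(\Gamma))(1+\ell^*(\Gamma^\circ))=24$ is exactly the right combinatorial fact, and the signature match follows. The appeal to Nikulin's uniqueness theorem can also be made to work, though not for the reason you state: indefiniteness alone is \emph{not} sufficient for uniqueness in the genus. What saves you is that $U\oplus\TPic{\Delta^\circ}$ has rank $2+\rk\TPic{\Delta^\circ}$ while $l(A_{\TPic{\Delta^\circ}})\le\rk\TPic{\Delta^\circ}$, so Nikulin's hypothesis $\rk\ge l(A)+2$ holds. You should say this explicitly.

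The genuine gap is the discriminant-form anti-isometry $q_{\TPic{\Delta^\circ}}\cong -q_{\Pic_\Delta}$. You correctly flag it as ``the crux'' and ``the delicate part'', but you do not prove it: you only announce an intention to read both discriminant forms off the toric data and match them through the edge/dual-edge pairing. This is precisely the step where replacing $\Pic_{\Delta^\circ}$ by its sublattice $\TPic{\Delta^\circ}$ must do real work, and it is far from formal --- one has to track how the extra generators $V_i^{(j)}$ coming from the correction term alter the discriminant form of $\Pic_\Delta$ and then exhibit an explicit anti-isometry with the form computed from the linear relations among toric divisors on the dual side. Nothing in your outline explains why these two finite quadratic forms should agree, and there is no general mechanism you can cite. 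Without this step the proposal is an outline, not a proof; indeed, the absence of such a conceptual argument in the literature is presumably why Rohsiepe resorted to exhaustive computation in the first place.
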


Note that because the rank of the unimodular lattice $U$ is 2 and the rank of the K3 lattice is 22, we have $\rk{(\Pic_{\Delta})} + \rk{(\Pic_{\Delta^\circ})} \geq 20$, with equality if and only if the toric correction term vanishes.

It follows from Theorem~\ref{T:Rohsiepe} that any of the bimodal singularities studied in \cite{MU} may be extended to a polynomial defining a K3 hypersurface in a Gorenstein Fano toric variety in such a way that Batyrev mirror symmetry for the hypersurface induces lattice mirror symmetry in the sense of Dolgachev-Nikulin-Pinkham. 
Our second Main Theorem shows that we may make these choices in a way compatible with Berglund-H\"{u}bsch duality.

\begin{thmb}\label{T:main2} Let $(B',\, B)$ be one of the bimodal singularity mirror pairs
\[
(Z_{13},\, J_{3,0}),\, 
(X_{2,0},\, S_{17}),\, 
(W_{18},\, W_{18}),\, 
(W_{17},\, S_{1,0}),\, 
(U_{16},\, U_{16}),\,
\] 
and let $f$ be the defining equation of $B$. 
Then there exists a reflexive polytope $\Delta$ and an invertible deformation $F$ of $f$ such that the Newton polytope of $F$ is a subpolytope of $\Delta$, the Newton polytope of $F^T$ is a subpolytope of $\Delta^\circ$, and $(\Pic_\Delta)^\perp \cong \TPic{\Delta^\circ}\oplus U$.
\end{thmb}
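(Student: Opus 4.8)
The plan is to reduce Theorem~B to a single polytope-containment problem, after which the lattice isomorphism comes for free. Concretely, once we exhibit a three-dimensional reflexive polytope $\Delta$ with polar dual $\Delta^\circ$ such that the Newton polytope $N(F)$ of $F$ is a subpolytope of $\Delta$ and the Newton polytope $N(F^T)$ of $F^T$ is a subpolytope of $\Delta^\circ$, the required isomorphism $(\Pic_\Delta)^\perp \cong \TPic{\Delta^\circ}\oplus U$ holds automatically: it is the content of Theorem~\ref{T:Rohsiepe} for \emph{every} polar dual pair of three-dimensional reflexive polytopes. Thus the final clause needs no separate lattice computation, and the whole theorem rests on realizing $N(F)$ and $N(F^T)$ inside a polar dual reflexive pair.

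For each of the five mirror pairs $(B',\,B)$ I would first fix a four-variable invertible deformation $F$ of the bimodal polynomial $f$ together with its Berglund-H\"ubsch transpose $F^T$, the latter realizing the strange-duality partner of Table~\ref{Ta:bimodal}. If $A$ denotes the $4\times 4$ exponent matrix of $F$, then quasi-homogeneity forces the four exponent vectors—the rows of $A$—onto a common affine hyperplane, so that $N(F)$ is a three-dimensional lattice simplex in a rank-three lattice $M$; the transpose $F^T$ has exponent matrix $A^T$, and the same reduction places $N(F^T)$ in the dual lattice $N$. By Theorem~A no invertible deformation of $f$ has a reflexive Newton polytope with vanishing toric correction term, so we genuinely must enlarge $N(F)$ rather than take $\Delta = N(F)$, and the duality we seek is only realizable after such an enlargement.

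The enabling fact is the containment $N(F)\subseteq (N(F^T))^\circ$, equivalently the finite system of pairing inequalities $\langle \mathbf{v},\mathbf{w}\rangle \geq -1$ ranging over the vertices $\mathbf{v}$ of $N(F)$ and $\mathbf{w}$ of $N(F^T)$; this is the combinatorial residue, in the non-reflexive regime, of the compatibility between Berglund-H\"ubsch and Batyrev duality used in \cite{MU} and \cite{Clarke}. Granting it, any reflexive $\Delta$ with $N(F)\subseteq \Delta \subseteq (N(F^T))^\circ$ solves the problem, since dualizing the outer inclusion turns $\Delta\subseteq (N(F^T))^\circ$ into $N(F^T)\subseteq \Delta^\circ$. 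I would therefore construct such a $\Delta$ by hand for each singularity: starting from the simplex $N(F)$, adjoin boundary lattice points lying inside $(N(F^T))^\circ$ until a reflexive polytope is reached, then confirm reflexivity directly by checking that the origin is the unique interior lattice point and that each facet lies on a hyperplane of the form $\langle\,\cdot\,,\mathbf{u}\rangle = 1$ with $\mathbf{u}\in N$.

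The main obstacle is exactly this sandwiching step. Enlarging $\Delta$ to absorb $N(F)$ shrinks $\Delta^\circ$ and endangers the inclusion $N(F^T)\subseteq \Delta^\circ$, so the two subpolytope conditions pull against one another; only the Berglund-H\"ubsch pairing inequalities guarantee that the interval between $N(F)$ and $(N(F^T))^\circ$ contains any reflexive lattice polytope at all, and even then existence is not formal. I expect the verification to be genuinely case-by-case: the self-dual pairs $W_{18}$ and $U_{16}$ should admit a symmetric choice with $\Delta$ and $\Delta^\circ$ built in parallel, whereas the asymmetric pairs $(Z_{13},\,J_{3,0})$, $(X_{2,0},\,S_{17})$ and $(W_{17},\,S_{1,0})$ will require the two embeddings to be balanced against each other by explicit inspection of lattice points.
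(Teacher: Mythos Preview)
Your reduction to Rohsiepe's Theorem~\ref{T:Rohsiepe} is correct and is a genuinely cleaner route than the paper's. The paper does not invoke Theorem~\ref{T:Rohsiepe} in its proof of Theorem~B: instead, for each of the five cases it constructs $\Delta$ explicitly, verifies the two subpolytope inclusions, and then computes the intersection matrices of $\TPic{\Delta^\circ}$ and $\Pic_\Delta$ directly from the toric data of Section~\ref{S:intersection}, checking that their discriminant groups match and applying Nikulin's criterion (Corollary~\ref{orthogonal}). Your observation that this last step is redundant---once $\Delta$ is reflexive and the containments hold, $(\Pic_\Delta)^\perp \cong \TPic{\Delta^\circ}\oplus U$ is already guaranteed by Theorem~\ref{T:Rohsiepe}---is entirely valid. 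What the paper's longer computation buys is explicit identification of the lattices themselves (e.g.\ $\Pic_\Delta \simeq U\oplus E_6\oplus E_8$ for $J_{3,0}$, or $U\oplus A_6\oplus E_8$ for $S_{17}$), information your approach would not produce.

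Two small corrections to your narrative. First, your assertion that ``we genuinely must enlarge $N(F)$'' overstates Theorem~A: that theorem forbids a reflexive $N(F)$ \emph{with vanishing toric correction term}, but Theorem~B does not require the correction term to vanish. In fact for $(Z_{13},\,J_{3,0})$ and $(U_{16},\,U_{16})$ the paper takes $\Delta=N(F)$ itself, which is already reflexive and already contains $N(F^T)$ in its dual; enlargement is needed only for $S_{17}$, $W_{18}$, and $S_{1,0}$. Second, your expectation that the self-dual singularities $W_{18}$ and $U_{16}$ should admit a self-dual $\Delta$ is not borne out: the paper's $\Delta$ for $U_{16}$ has $4$ lattice points versus $19$ in $\Delta^\circ$, and for $W_{18}$ the chosen pair is lattice-isomorphic to the (asymmetric) pair used for $S_{17}$. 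Neither point invalidates your strategy, but you should not lean on the self-duality heuristic when carrying out the case-by-case construction.
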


The plan of the paper is as follows.  In Section~\ref{S:mirror} we review the various mirror constructions, remark on the connection between Rohsiepe's mirror proposal and Morrison's monomial-divisor mirror map, and establish notation.  In Section~\ref{S:MainTheorem} we prove Theorem~A by a case-by-case analysis.  In Section~\ref{S:Rohsiepe} we prove Theorem~B and describe an explicit choice of polytopes and lattices in each case. Some of our analysis uses computations in the computer algebra systems Mathematica and SageMath. See the ancillary files attached to the preprint version of this paper, \cite{MWpreprint}, for illustrative code.

\section{Mirror constructions}\label{S:mirror}

In this section, we review the mirror constructions we are comparing and establish notation.

\subsection{Berglund-H\"{u}bsch duality}

We briefly review the Berglund-H\"{u}bsch duality construction in the form we will use it.  For more detail see, for example, \cite{ABS}.  Let $A=(a_{ij})$ be a matrix of nonnegative integers, and consider the corresponding polynomial $F_A$ given by the sum of $n+1$ monomials in $n+1$ variables:

\[F_A = \sum_{i=0}^n \prod_{j=0}^n x_j^{a_{ij}}.\]

\begin{defn}
We say the polynomial $F_A$ is \emph{invertible} if the matrix $A$ is invertible, there exist positive integers $q_j$ such that $\sum_{j=0}^n q_j a_{ij}$ is the same constant for all $i$, and the polynomial $F_A$ has exactly one critical point, at the origin.
\end{defn}

Invertible polynomials were classified in \cite{KS}.  They may be written as sums of polynomials in disjoint variables of three \emph{atomic types}: Fermat polynomials, of the form $x_1^{a_1}$, loops, of the form $x_1^{a_1}x_2 + x_2^{a_2}x_3+\dots+x_{m-1}^{a_{m-1}}x_m + x_m^{a_m}x_1$, and chains, of the form $x_1^{a_1}x_2 + x_2^{a_2}x_3+\dots+x_{m-1}^{a_{m-1}}x_m + x_m^{a_m}$.

\begin{defn}
We say an invertible polynomial $F_A$ satisfies \emph{the Calabi-Yau condition} if $\sum_{j=0}^n q_j a_{ij} = \sum_{j=0}^n q_j$.
\end{defn}

If $F_A$ is an invertible polynomial satisfying the Calabi-Yau condition, then the weights $(q_0, \dots, q_n)$ determine a weighted projective space $\WP(q_0, \dots, q_n)$ and $F_A$ determines a Calabi-Yau hypersurface $X_A$ in this weighted projective space.  When $n=3$, $X_A$ is a K3 surface.

In general, Berglund-H\"{u}bsch duality is a duality of orbifolds.  Given an appropriately chosen group of discrete symmetries $G$ acting on a Calabi-Yau hypersurface $X_A$, we obtain a mirror pair $X_A/G$ and $X_{A^T}/G^T$, where $X_{A^T}$ is the Calabi-Yau hypersurface determined by the transpose matrix $A^T$ and $G^T$ can be computed from the data of $G$ and $A$.  We will focus on the case where $G$ is trivial.  In this case, $G^T$ consists of automorphisms of $X_{A^T}$ that are induced by the multiplicative action of the torus $(\CC^*)^{n+1}$ on the ambient weighted projective space and act \emph{symplectically} on $X_{A^T}$, preserving the holomorphic form.

\subsection{Batyrev's duality and the monomial-divisor mirror map}

We review the mirror symmetry construction for Calabi-Yau hypersurfaces in toric varieties described in \cite{BatyrevMirror}, examine the differences that arise in the case of K3 surfaces, and establish notation.  For a more detailed exposition, see \cite{CoxKatz} for Calabi-Yau varieties or \cite{WhitcherSurvey} for the K3 surface case.  Let $N \cong \mathbb{Z}^k$ be an integral lattice, and let $M = \mathrm{Hom}(N, \mathbb{Z})$ be the dual lattice.  Let $N_\mathbb{R} = N \otimes \mathbb{R}$ and $M_\mathbb{R} = M \otimes \mathbb{R}$ be the corresponding vector spaces.  The duality between $N$ and $M$ induces a real-valued pairing $\langle v, w \rangle$ between elements of $N_\mathbb{R}$ and $M_\mathbb{R}$.  

A \emph{lattice polytope} in $N_\mathbb{R}$ or $M_\mathbb{R}$ is the finite hull of a convex set of vectors in the lattice.  Given a lattice polytope $\Delta$ in $M_\mathbb{R}$ containing the origin, we define its \emph{polar dual} $\Delta^\circ$ in $N_\mathbb{R}$ as
\[\{v \in N \mid \langle v, w \rangle \geq -1 \text{ for all } w \in \Delta\}.\]
If $\Delta^\circ$ is also a lattice polytope, we say $\Delta$ and $\Delta^\circ$ are \emph{reflexive}.  Note that $(\Delta^\circ)^\circ = \Delta$.  Reflexive polytopes in three and four dimensions are classified in \cite{KSReflexive}: up to lattice isomorphism, there are 4,319 reflexive polytopes in three dimensions and 473,800,776 in four dimensions.  The authors of \cite{KSReflexive} also specify a normal form that selects a unique representative of each equivalence class.

Now, assume $\Delta$ is a reflexive polytope in $M_\mathbb{R}$.  We may obtain a fan $R$ in $N_\mathbb{R}$ by taking the normal fan to $\Delta$, or, equivalently, by taking the fan over the faces of $\Delta^\circ$.  A fan obtained in this way determines a Gorenstein Fano toric variety.  A simplicial refinement $\Sigma$ of the fan $R$ such that the one-dimensional cones are the lattice points of $\Delta^\circ$ is called a \emph{maximal projective subdivision} of the fan.  In three dimensions, a maximal projective subdivision $\Sigma$ determines a smooth toric variety, which we call $\mathbb{P}_\Delta$.  In four dimensions, the resulting toric variety $\mathbb{P}_\Delta$ may have orbifold singularities.  However, in either case, a general anticanonical hypersurface in such a toric variety will be smooth.  

The hypersurfaces obtained using three-dimensional reflexive polytopes are smooth K3 surfaces, while four-dimensional reflexive polytopes determine Calabi-Yau threefolds.  Such a hypersurface will be a semiample divisor in the ambient toric variety $\mathbb{P}_\Delta$, but need no longer be ample.  Given a maximal projective subdivision $\Sigma$ of the normal fan to $\Delta$ and a hypersurface $X$ in $\mathbb{P}_\Delta$, we say $X$ is \emph{$\Sigma$-regular}, or, if the context is clear, \emph{regular}, if the intersection of $X$ with the torus $\mathbf{T}_\sigma$ is either empty or a smooth subvariety of codimension 1 for every cone $\sigma$ in $\Sigma$. 

Given a face $\Gamma$ of a lattice polytope, we write $\ell(\Gamma)$ for the number of lattice points in $\Gamma$, and $\ell^*(\Gamma)$ for the number of lattice points in the relative interior of $\Gamma$.  In \cite{BatyrevMirror}, for $k \geq 4$, Batyrev derived the following formulas for the Hodge numbers of a regular Calabi-Yau $k-1$-fold $X$ obtained from an $n$-dimensional reflexive polytope $\Delta \subset M_\mathbb{R}$ by the above procedure:

\begin{align}
h^{1,1}(X) &= \ell(\Delta^\circ) - k - 1 - \sum_{\mathrm{codim}\; \Gamma^\circ = 1} \ell^*(\Gamma^\circ) + \sum_{\mathrm{codim} \; \Gamma^\circ = 2} \ell^*(\Gamma^\circ) \ell^*(\Gamma)\\
h^{k-2,1}(X) &= \ell(\Delta) - k - 1 - \sum_{\mathrm{codim}\; \Gamma = 1} \ell^*(\Gamma) + \sum_{\mathrm{codim} \; \Gamma^\circ = 2} \ell^*(\Gamma^\circ) \ell^*(\Gamma)
\end{align}

\noindent By reversing the roles of $\Delta$ and $\Delta^\circ$, we obtain formulas for the Hodge numbers of the mirror $k-1$-fold $X^\circ$; we see $h^{1,1}(X) = h^{k-2,1}(X^\circ)$ and $h^{k-2,1}(X) = h^{1,1}(X^\circ)$.  

Batyrev's computation highlights the role of two important subspaces.  The so-called \emph{toric} subspace $H^{1,1}_{\rm tor}(X)$ of $H^{1,1}(X)$ is given by the pullback of $H^{1,1}(\mathbb{P}_\Delta)$ along the natural inclusion map, and has dimension $\ell(\Delta^\circ) - k - 1 - \sum_{\mathrm{codim}\; \Gamma^\circ = 1} \ell^*(\Gamma^\circ)$.  The \emph{polynomial deformation space} $H^{k-2,1}_{\rm poly}(X)$ is a subspace of $H^{k-2,1}(X)$ isomorphic to the first-order polynomial deformations of $X$, and has dimension $\ell(\Delta) - k - 1 - \sum_{\mathrm{codim}\; \Gamma = 1} \ell^*(\Gamma)$.  We call $\sum_{\mathrm{codim} \; \Gamma^\circ = 2} \ell^*(\Gamma^\circ) \ell^*(\Gamma)$, which measures the difference between $H^{1,1}_{\rm tor}(X)$ and $H^{1,1}(X)$ or $H^{k-2,1}_{\rm poly}(X)$ and $H^{k-2,1}(X)$, the \emph{toric correction term}.  Aspinwall, Greene, and Morrison showed in \cite{AGM} that there is a natural isomorphism between $H^{1,1}_{\rm tor}(X)$ and $H^{k-2,1}_{\rm poly}(X^\circ)$, induced by a correspondence between divisors and monomials which they termed the \emph{monomial-divisor mirror map}.

In the case of a K3 hypersurface in a three-dimensional toric variety, we have $k-2 = 1$.  Since all K3 surfaces have the same Hodge diamond, the equality of Hodge numbers is trivial.  As Dolgachev observed in \cite{Dolgachev}, and as we will discuss in more detail in \S~\ref{S:Dolgachev}, one may instead study the structure of the Picard group of a K3 surface.  For a regular K3 surface $X$ obtained from a three-dimensional reflexive polytope $\Delta \subset M_\mathbb{R}$, we have the following inequality (see \cite{Rohsiepe} for a detailed discussion):

\begin{equation}\label{E:PicInequality}
\mathrm{rank}\,\mathrm{Pic}(X) \geq \ell(\Delta^\circ) - 4 - \sum_{\mathrm{codim}\; \Gamma^\circ = 1} \ell^*(\Gamma^\circ) + \sum_{\mathrm{codim} \; \Gamma^\circ = 2} \ell^*(\Gamma^\circ) \ell^*(\Gamma).
\end{equation}

Here, $\ell(\Delta^\circ) - 4 - \sum_{\mathrm{codim}\; \Gamma^\circ = 1} \ell^*(\Gamma^\circ)$ measures the rank of the subgroup of so-called \emph{toric divisors} $(\mathrm{Pic}_\Delta)_\mathrm{tor}$, generated by the pullback of the divisors in the ambient space. The toric correction term measures the rank of a sublattice $L_0(\Delta)$ corresponding to divisors of the ambient space whose intersection with $X$ splits into multiple components.  Together, $\mathrm{Pic}_\mathrm{tor}(X)$ and $L_0(\Delta)$ generate the lattice that we shall call $\mathrm{Pic}_\Delta$. We denote the rank of this lattice by $\rho_\Delta$. We have the formula

\begin{equation}\label{E:rhoFormula}
\rho_\Delta = \ell(\Delta^\circ) - 4 - \sum_{\mathrm{codim}\; \Gamma^\circ = 1} \ell^*(\Gamma^\circ) + \mathrm{rank}\,L_0(\Delta).
\end{equation}

The inequality $\mathrm{rank}\,\mathrm{Pic}(X) \geq \rho_\Delta$ is inevitable, because non-isotrivial families of K3 surfaces do not have constant Picard rank: Oguiso showed in \cite{OguisoPic} that any analytic neighborhood in the base of a one-parameter,
non-isotrivial family of K3 surfaces has a dense subset where the Picard ranks of
the corresponding surfaces are strictly greater than the minimum Picard rank of that family.  On the other hand, Bruzzo and Grassi show in \cite{Bruzzo-Grassi} that when the K3 hypersurfaces are both smooth and ample, $\mathrm{Pic}(X)=(\mathrm{Pic}_\Delta)_\mathrm{tor}$ for very general $X$.  In this case, the toric correction term is 0, so $(\mathrm{Pic}_\Delta)_\mathrm{tor} = \mathrm{Pic}_\Delta$. (Recall that a very general property holds outside a countable union of proper closed subvarieties).  

In our comparison of mirror constructions, we often begin with a polynomial $f$ and then explore different ways that this polynomial could be used to represent a hypersurface in a toric variety. Combinatorially, the polynomial corresponds to a choice of lattice points determined by its monomials, and we are choosing different ways to include these points in a lattice polytope (typically a reflexive polytope). Hypersurfaces obtained in this way need not be isomorphic. However, by \cite[Theorem 1.1]{DFK}, as long as $f$ is not divisible by one of its coordinates $x_i$, they are birational.

\subsection{Reconciling Batyrev and 
Dolgachev-Nikulin-Pinkham mirror symmetry}\label{S:Dolgachev}

Dolgachev proposed in \cite{Dolgachev} that one should view mirror symmetry for K3 surfaces as a relationship between moduli spaces of \emph{lattice-polarized} K3 surfaces.  Recall that a \emph{lattice} is a finitely-generated free $\mathbb{Z}$-module equipped with a non-degenerate, symmetric integer-valued form.  Let $E_8$ be the unimodular, negative definite $ADE$ lattice of rank 8, and let $U$ be the indefinite unimodular \emph{hyperbolic lattice}, with intersection form given by $\left(\begin{smallmatrix} 0 & 1 \\ 1 & 0 \end{smallmatrix}\right)$.  Then for any K3 surface $X$, there exists an isomorphism $\phi: H^2(X,Z) \to \Lambda_{K3}$, where $\Lambda_{K3}:= U \oplus U \oplus U \oplus E_8 \oplus E_8$ is the \emph{K3 lattice}.  A choice of such a $\phi$ is called a \emph{marking}.  

Suppose we are given an even, nondegenerate lattice $M$ of signature $(1,t)$ and a primitive embedding $M \hookrightarrow \Lambda_{K3}$.  (Recall that a lattice embedding is primitive if the quotient of the ambient lattice by the image of the embedding is a free abelian group.)  We say $X$ is \emph{$M$-polarized} if there exists a primitive embedding $M \hookrightarrow \mathrm{Pic}(X)$, and we say $X$ is \emph{marked $M$-polarized} if there exists a marking $\phi$ such that $\phi^{-1}(M) \subseteq \mathrm{Pic}(X)$.  In \cite{Dolgachev}, Dolgachev constructed moduli spaces of marked $M$-polarized K3 surfaces satisfying an appropriate pseudo-ampleness condition.

We say $M$ is $m$-\emph{admissible} if $M^\perp = J \oplus \check{M}$ for some lattice $J$ isomorphic to $mU$.  In this situation, we call $\check{M}$ the \emph{mirror} of $M$.  Note that $\mathrm{rank} M + \mathrm{rank}\check{M} = 20$.  The simplest case, and the one we will be concerned with in what follows, is when $M$ is 1-admissible.  When this happens, $(\check{M})^\perp = J \oplus M$, so not only is $\check{M}$ the mirror of $M$, but $M$ is the mirror of $\check{M}$.  Furthermore, one may view the moduli spaces of $M$- and $\check{M}$-polarized K3 surfaces as mirror families.

Now, suppose we are given a family of K3 surfaces realized as hypersurfaces in a toric variety obtained from a reflexive polytope.  To study 
Dolgachev-Nikulin-Pinkham's 
mirror symmetry construction in this context, one must choose a polarizing lattice.  The choice is simplest when the toric correction term is 0.  When we calculate the toric correction term for each of the 4319 isomorphism classes of three-dimensional reflexive polytopes, we find that 1863 yield a toric correction term of 0, of which 53 correspond to self-dual reflexive polytopes; thus, the 0 toric correction term case is common, but not the majority.  Given a reflexive polytope $\Delta$ in one of these 1863 isomorphism classes, we let $M$ be the space of divisors induced by the ambient toric variety, $\mathrm{Pic}_\Delta = (\mathrm{Pic}_\Delta)_\mathrm{tor}$.  Rohsiepe showed in \cite{Rohsiepe} that in this case $M^\perp$ is isomorphic to $U \oplus \mathrm{Pic}_{\Delta^\circ}$, so $\check{M}$ is $\mathrm{Pic}_{\Delta^\circ} = (\mathrm{Pic}_{\Delta^\circ})_\mathrm{tor}$.  This choice of polarizing lattice $M$ is clearly symmetric: if we swap the roles of $\Delta$ and $\Delta^\circ$, we also swap the roles of $M$ and $\check{M}$.  We may view this computation as a confirmation that for toric K3 hypersurfaces, the monomial-divisor mirror map is not merely an identification of groups, but extends to the lattice structure.  

For the remaining 2456 isomorphism classes, we cannot set $M$ to $(\mathrm{Pic}_\Delta)_\mathrm{tor}$ and $\check{M}$ to $(\mathrm{Pic}_{\Delta^\circ})_\mathrm{tor}$ because the ranks of the lattices are too small, and we cannot set $M$ to $\mathrm{Pic}_\Delta$ and $\check{M}$ to $\mathrm{Pic}_{\Delta^\circ}$ because the ranks of those lattices are too big.  However, a computation described in \cite{Rohsiepe} shows that if we choose $M$ to be $(\mathrm{Pic}_\Delta)_\mathrm{tor}$, then $M^\perp$ is always isomorphic to $U \oplus \mathrm{Pic}_{\Delta^\circ}$.  Thus, we may reconcile the Batyrev and 
Dolgachev-Nikulin-Pinkham 
 mirror symmetry constructions if we polarize by the pullback of the divisors of the ambient space on one side, and the full lattice generated by intersecting our K3 surface with the ambient space divisors on the other side.

\subsection{Picard lattices and intersection numbers}\label{S:intersection}

In Section~\ref{S:Rohsiepe}, we illustrate the computation of $\mathrm{Pic}_\Delta$ and $(\mathrm{Pic}_\Delta)_\mathrm{tor}$ for specific cases of interest. We will use the following facts about lattices: 

\begin{cor}[Corollary 1.6.2~\cite{Nikulin80}]\label{orthogonal}
Let lattices $S$ and $T$ be primitively embedded into the $K3$ lattice. 
Then $S$ and $T$ are orthogonal to each other in the $K3$ lattice if and only if $q_S\simeq -q_T$, where $q_S$ (resp. $q_T$) is the discriminant form of $S$ (resp. $T$). 
\end{cor}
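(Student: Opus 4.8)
The plan is to derive this from Nikulin's theory of discriminant forms and even overlattices, using only that $\Lambda_{K3}$ is even, unimodular, and indefinite of signature $(3,19)$. For a lattice $S$ write $S^* = \mathrm{Hom}(S,\mathbb{Z})$ for its dual, $A_S = S^*/S$ for its discriminant group, and recall that $q_S\colon A_S \to \mathbb{Q}/2\mathbb{Z}$ is the induced discriminant form. I read ``$S$ and $T$ are orthogonal to each other'' as ``$T$ is isometric to the orthogonal complement $S^\perp$'', so that in particular $\rk S + \rk T = 22$.

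For the forward implication, suppose $T = S^\perp$ in $\Lambda_{K3}$. Then $S \oplus T$ is a finite-index sublattice and $\Lambda_{K3}$ is an even overlattice of it, so by the overlattice correspondence $\Lambda_{K3}$ is encoded by an isotropic subgroup $H \subseteq A_S \oplus A_T$ with respect to $q_S \oplus q_T$, where $H = \Lambda_{K3}/(S \oplus T)$. Two numerical inputs pin $H$ down. First, unimodularity of $\Lambda_{K3}$ forces $[\Lambda_{K3} : S \oplus T]^2 = |A_S|\,|A_T|$, i.e.\ $|H|^2 = |A_S|\,|A_T|$. Second, since both $S$ and $T = S^\perp$ are primitive, the two coordinate projections $p_S\colon H \to A_S$ and $p_T\colon H \to A_T$ are injective, so $|H| \le |A_S|$ and $|H| \le |A_T|$. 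Combining these gives $|H| = |A_S| = |A_T|$ with both projections bijective, whence $H$ is the graph of an isomorphism $\gamma = p_T \circ p_S^{-1}\colon A_S \to A_T$. Isotropy of $H$ says $q_S(x) + q_T(\gamma(x)) = 0$ for every $x \in A_S$, which is exactly the assertion that $\gamma$ is an isometry $(A_S,q_S) \xrightarrow{\sim} (A_T,-q_T)$, i.e.\ $q_S \simeq -q_T$.

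For the converse I would run this construction backwards. Given an isometry $\gamma\colon (A_S,q_S)\xrightarrow{\sim} (A_T,-q_T)$, set $H = \{(x,\gamma(x)) : x \in A_S\} \subseteq A_S \oplus A_T$; the defining property of $\gamma$ makes $H$ isotropic for $q_S \oplus q_T$, and because $H$ is the graph of an isomorphism of groups of equal order it is maximal isotropic. The overlattice $L \supseteq S \oplus T$ determined by $H$ is therefore even and unimodular, with $S$ and $T$ sitting inside it as mutually orthogonal primitive sublattices, each the complement of the other. Invoking the uniqueness of the indefinite even unimodular lattice of signature $(3,19)$, once the signature of $L$ is seen to be $(3,19)$ we conclude $L \cong \Lambda_{K3}$, and this isomorphism realizes $S$ and $T$ as orthogonal complements in the K3 lattice.

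The step I expect to require the most care is the order-and-injectivity bookkeeping in the forward direction: one must check that the relation $|H|^2 = |A_S|\,|A_T|$ coming from unimodularity, together with injectivity of both projections coming from primitivity, genuinely forces bijectivity, so that $\gamma$ is well defined before one even uses isotropy. A secondary subtlety is the signature matching in the converse: the relation $q_S \simeq -q_T$ only controls signatures modulo $8$ (via the Milgram--Gauss sum formula), so the exact identity of signatures must come from the standing hypothesis that $S$ and $T$ embed in $\Lambda_{K3}$ as complementary-rank primitive sublattices. This bounds each positive part by $3$, and together with the mod-$8$ relation and $\rk S + \rk T = 22$ it forces the total signature to be exactly $(3,19)$, after which uniqueness of $\Lambda_{K3}$ closes the argument.
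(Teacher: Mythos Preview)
The paper does not prove this statement itself; it is quoted from \cite{Nikulin80} without argument. Your proof is the standard one and is correct. The forward direction is precisely Nikulin's Corollary~1.6.2: primitivity of $S$ and $T$ makes both projections $H \to A_S$ and $H \to A_T$ injective, unimodularity gives $|H|^2 = |A_S|\,|A_T|$, and together these force $H$ to be the graph of an anti-isometry. Your converse---gluing $S\oplus T$ along the graph of the given anti-isometry and then identifying the resulting even unimodular lattice with $\Lambda_{K3}$ via the classification of indefinite even unimodular forms---is also sound; the signature bookkeeping you sketch (the embeddings into $\Lambda_{K3}$ give $p_S+p_T\le 6$, and the mod-$8$ relation from $q_S\simeq -q_T$ together with rank $22$ gives $p_S+p_T\equiv 3\pmod 4$, hence $p_S+p_T=3$) goes through.

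One small caveat worth flagging: as literally phrased, the corollary fixes primitive embeddings of $S$ and $T$ at the outset, whereas your converse only constructs \emph{some} primitive embeddings realizing them as orthogonal complements. Upgrading to the given embeddings would require uniqueness of primitive embeddings, which is not automatic. In the paper's applications, however, only the existence version is used, and your explicit reading of ``orthogonal to each other'' as ``$T$ isometric to $S^\perp$'' (so in particular $\rk S+\rk T=22$) is exactly the interpretation needed there.
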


\begin{cor}[Corollary 1.12.3~\cite{Nikulin80}]\label{primitive}
Let $S$ be an even lattice of signature $(t_+, t_-)$ and $\Lambda$ be an even unimodular lattice of signature $(l_+, l_-)$. 
There exists a primitive embedding of $S$ into $\Lambda$ if and only if the following three conditions are simultaneously satisfied. 
\begin{enumerate}
\item[$(1)$] $l_+-l_- \equiv 0 \mod 8$, 
\item[$(2)$] $l_- -t_- \geq 0$ and $l_+ -t_+ \geq 0$, and 
\item[$(3)$] $\rk \Lambda - \rk S > l(A_S)$. 
\end{enumerate}
Here $A_S$ denotes the discriminant group of $S$, which is finitely-generated abelian, and $l(A_S)$ is the minimal number of generators of $A_S$. 
\end{cor}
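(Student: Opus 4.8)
The plan is to reduce the existence of a primitive embedding $S \hookrightarrow \Lambda$ into the unimodular lattice $\Lambda$ to the existence of a suitable orthogonal complement, and then to invoke the existence criterion for even lattices with prescribed signature and discriminant form. First I would set up the dictionary between primitive embeddings into a unimodular lattice and gluing. If $S \hookrightarrow \Lambda$ is a primitive embedding with $\Lambda$ unimodular, set $T := S^\perp$. Then $T$ is an even lattice of signature $(l_+ - t_+,\, l_- - t_-)$, and because $\Lambda$ is unimodular the two discriminant forms are negatives of one another, $q_T \cong -q_S$; this is exactly the content of Corollary~\ref{orthogonal}. Conversely, given any even lattice $T$ of signature $(l_+ - t_+,\, l_- - t_-)$ with $q_T \cong -q_S$, one glues $S \oplus T$ along the graph of an isometry $q_S \xrightarrow{\sim} -q_T$ sitting inside the discriminant group $A_S \oplus A_T$; the resulting overlattice is even, unimodular, of signature $(l_+, l_-)$, and contains $S$ primitively. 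Thus the corollary becomes equivalent to the assertion that such a complementary lattice $T$ exists precisely when conditions $(1)$--$(3)$ hold. The formal input here is the overlattice/isotropic-subgroup correspondence of \S1.6 of~\cite{Nikulin80}.

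Next I would apply the existence theorem for even lattices with given invariants (Nikulin's existence theorem, \cite{Nikulin80}): an even lattice $T$ with signature $(s_+, s_-)$ and discriminant form $q := -q_S$ exists if $s_+, s_- \geq 0$, the rank $s_+ + s_-$ is at least $l(A_q) = l(A_S)$, and the congruence $s_+ - s_- \equiv \sigma(q) \pmod 8$ holds, subject to additional $p$-adic conditions only in the boundary case $s_+ + s_- = l(A_q)$. Here $\sigma(q)$ denotes the signature invariant of the discriminant form modulo $8$. Substituting $s_+ = l_+ - t_+$ and $s_- = l_- - t_-$ converts the non-negativity into condition $(2)$ and the rank bound into condition $(3)$. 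For the congruence, Milgram's formula gives $\sigma(q_S) \equiv t_+ - t_- \pmod 8$, hence $\sigma(q) = -\sigma(q_S) \equiv t_- - t_+$; the existence congruence $(l_+ - t_+) - (l_- - t_-) \equiv t_- - t_+ \pmod 8$ then collapses to $l_+ - l_- \equiv 0 \pmod 8$, which is condition $(1)$.

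The main obstacle is the rank bound, specifically the gap between the strict inequality of condition $(3)$ and the non-strict bound that appears in the raw existence theorem. At the boundary $s_+ + s_- = l(A_S)$ one must check delicate compatibility conditions on the $p$-adic components of $q_S$, and these can genuinely fail. The strict inequality $\rk \Lambda - \rk S > l(A_S)$ is precisely what removes these complications: with at least one generator's worth of extra room, the form $q$ can be realized on an indefinite lattice by splitting off a unimodular summand (a copy of $U$ or of $E_8$) to absorb the signature, so that the local obstructions are automatically satisfiable. Making this rigorous---that is, showing that $(1)$, $(2)$, and the \emph{strict} form of $(3)$ together imply the supplementary $p$-adic conditions of the full existence theorem---is the crux; the reduction of the first paragraph is essentially formal once the discriminant-form machinery is available.
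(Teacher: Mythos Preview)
The paper does not supply its own proof of this statement; it is quoted verbatim as Corollary~1.12.3 of~\cite{Nikulin80} and used as a black box. So there is nothing to compare against directly, and the question becomes whether your sketch reconstructs Nikulin's argument correctly.

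Your approach to the \emph{sufficiency} direction is exactly Nikulin's: reduce a primitive embedding $S \hookrightarrow \Lambda$ with $\Lambda$ unimodular to the existence of a complement $T$ with $q_T \cong -q_S$ and the correct signature, then appeal to the existence theorem for even lattices with prescribed discriminant form (Theorem~1.10.1 in~\cite{Nikulin80}). You correctly identify that the strict inequality in~(3) is what lets one bypass the $p$-local obstructions that arise at the boundary rank, and your Milgram-formula computation reducing the signature congruence to~(1) is right.

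There is, however, a genuine issue with the statement as written that your proposal does not flag: the ``only if'' direction is false with the strict inequality in~(3). If $S \hookrightarrow \Lambda$ primitively, then $T = S^\perp$ satisfies $\rk T = \rk\Lambda - \rk S$ and $A_T \cong A_S$, whence $\rk\Lambda - \rk S \geq l(A_S)$, but equality can and does occur (e.g.\ $S = U(n)$ inside $U \oplus U$). Nikulin's Corollary~1.12.3 is stated only as a sufficient condition; the ``if and only if'' is an overstatement introduced in the paper's transcription. Your first paragraph asserts that the corollary is \emph{equivalent} to the existence of $T$ under~(1)--(3), which is not true for the necessity direction. You should either note that the biconditional fails as stated, or restrict yourself to proving sufficiency, which is all that the paper actually uses.
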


We will use the following fundamental lemma. 
\begin{lem}\label{PrimeDet}
Let $L$ be a sublattice of the $K3$ lattice $\Lambda_{K3}:=U^{\oplus3}\oplus E_8^{\oplus2}$. 
If the discriminant number $\det(L)$ of $L$ is prime, then, $L$ has no proper overlattices. 
\end{lem}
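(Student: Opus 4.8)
The plan is to reduce the statement to an elementary divisibility fact about the index of a finite-index sublattice, so that the primality of the determinant does all the work. Recall that an overlattice $L'$ of $L$ is by definition an integral lattice containing $L$ with the same rank, so $[L':L]$ is finite; write $m = [L':L]$. First I would record the standard behavior of the determinant under such an inclusion: choosing a $\mathbb{Z}$-basis of $L$ and one of $L'$, the change-of-basis matrix $B$ expressing the $L$-basis in the $L'$-basis is an integer matrix with $|\det B| = m$, and the Gram matrices satisfy $G_L = B^T G_{L'} B$. Taking determinants gives the key identity
\[
\det(L) = m^2 \det(L').
\]

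Next I would combine this with the two hypotheses. Since $L'$ is integral, $\det(L')$ is a nonzero integer, and by assumption $|\det(L)| = p$ is prime. Passing to absolute values in the displayed identity yields $p = m^2 |\det(L')|$, so $m^2 \mid p$. As $p$ is prime its only positive divisors are $1$ and $p$, whence $m^2 \in \{1, p\}$; the case $m^2 = p$ is excluded because a prime is never a perfect square, so $m = 1$ and $L' = L$. This shows $L$ has no proper overlattice, which is exactly the claim.

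I expect essentially no real obstacle here: the argument is purely arithmetic and does not use the ambient embedding into $\Lambda_{K3}$ beyond guaranteeing that $L$ is nondegenerate, which is automatic once $\det(L)=p\neq 0$. The only points needing care are conventions, namely confirming that the ``discriminant number'' is $\det(L) = \pm|A_L|$ (so that primality forces the discriminant group to have order $p$) and that ``overlattice'' means an integral, equal-rank overlattice (so that $\det(L')$ is an integer and the index is finite). For a more structural phrasing one could instead invoke Nikulin's classification of overlattices by isotropic subgroups $H \leq A_L$ with $\det(L') = \det(L)/|H|^2$: here $A_L \cong \mathbb{Z}/p\mathbb{Z}$ admits no nontrivial subgroup whose squared order divides $p$, so only $H = 0$ occurs. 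I would nonetheless keep the determinant computation as the main line, since it is shorter and fully self-contained.
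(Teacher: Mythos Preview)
Your argument is correct and is essentially the same as the paper's: both rely on the index-squared determinant relation $\det(L)=[L':L]^2\det(L')$ and conclude from primality of $\det(L)$ that $[L':L]=1$. Your additional remarks on conventions and the Nikulin isotropic-subgroup reformulation are sound but not needed beyond what the paper uses.
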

\begin{proof} Let $\det(L)=p$ be a prime number. 
If it were an overlattice $L\subset N\subsetneq \Lambda_{K3}$ (the $N$ is an overlattice), then, we would have $[N:L]^2\det{(N)} = \det{(L)} = 1\cdot p$. 
Thus, one has $[N:L]=1$. 
Therefore, there does not exist such a lattice $N$. 
In particular, the discriminant group $A_L$ of $L$ is isomorphic to $\mathbb{Z}\slash p\mathbb{Z}$. \end{proof}

We will use the following formulas to compute the Picard lattices from toric data. 

Let $\mathbf{M}$ be a three-dimensional lattice. 
Let $\{\mathbf{e}_1,\, \mathbf{e}_2,\, \mathbf{e}_3\}$ be the standard basis of $\mathbb{R}^3\simeq \mathbf{M}\otimes_{\mathbb{Z}}\mathbb{R}=: \mathbf{M}_\mathbb{R}$. Let $\Delta$ be a reflexive polytope in $\mathbf{M}_\mathbb{R}$. 
Denote by ${\rm Div}_{\mathbb{T}}(\widetilde{\mathbb{P}}_{\!\Delta})$ the set of all toric divisors $\widetilde{D}_i$ in the MPCP toric resolution $\widetilde{\mathbb{P}}_{\!\Delta}$ of the projective toric variety $\mathbb{P}_{\!\Delta}$~\cite{BatyrevMirror}. The lattice $\mathbf{M}$ is explicitly given in Section~3, using the description given in~\cite{MU}. It is defined by the weight system $\mbi{w}=(w_0,\,w_1,\,w_2,\,w_3)$ such that the deformation $F$ of the defining polynomial $f$ of a bimodal singularity is an anticanonical member of the weighted projective space of weight system $\mbi{w}$. 

The toric divisors are related by a linear system with three equations: 
\begin{equation}
\sum_{i=1}^{d+3}(v_i,\, \mathbf{e}_j) \widetilde{D}_i= 0 \qquad j=1,2,3, \label{ToricLinearRelation}
\end{equation}
where $( \, , \, )$ is the standard inner product in $\mathbb{R}^3$. 

If one restricts a toric divisor $\widetilde{D}_i\in{\rm Div}_{\mathbb{T}}(\widetilde{\mathbb{P}}_{\!\Delta})$ to the minimal model $\widetilde{X}$ of a generic section $X$ by the MPCP toric resolution of the ambient space, one has a divisor on $\widetilde{X}$ which we denote by $D_i$. 

As to the intersection numbers, 
\begin{equation}\label{SelfIntersection}
D_i^2 = 
\begin{cases}
2\ell^*(\psi_i)-2 & \textnormal{if $v_i$ is a vertex. }\\
-2 & \textnormal{if $v_i$ is in the interior of an edge. }
\end{cases}
\end{equation}
Here $\psi_i$ is the face in $\Delta^*$ that is dual to $v_i$, and $\ell^*(\psi_i)$ is the number of lattice points in the interior of the face $\psi_i$. 
Moreover,  
\begin{equation}\label{Intersection}
D_i.D_j = 
\begin{cases}
1 & \textnormal{if $v_i$ and $v_j$ are next to each other on an edge. } \\
\ell^*(m_{ij}^*)+1 & \textnormal{if $v_i$ and $v_j$ are vertices that are connected } \\
 & \textnormal{by an edge $m_{ij}$ whose dual is $m_{ij}^*$. } \\
0 & \textnormal{otherwise}. 
\end{cases}
\end{equation}
For the above formulas $(2), (3)$, see \cite[Chapter 5]{FultonToric}; for further illustrations of their use in the case of K3 hypersurfaces, see \cite{MaseDis} or \cite{Rohsiepe}. 


\section{Invertible deformations}\label{S:MainTheorem}

In the following subsections, we prove our first Main Theorem by a case-by-case analysis:
 
\begin{thma}
For each of the bimodal singularity mirror pairs $(B',\, B)$ being
\[
(Z_{13},\, J_{3,0}),\, 
(X_{2,0},\, S_{17}),\, 
(W_{18},\, W_{18}),\, 
(W_{17},\, S_{1,0}),\, 
(U_{16},\, U_{16}),\,
\]
let $f$ be the defining equation of $B$. 
Then, there does not exist an invertible deformation $F$ of $f$ such that the Newton polytope of $F$ is a reflexive polytope $\Delta$ and $\rk \Pic_\Delta + \rk \Pic_{\Delta^\circ}=20$. 
\end{thma}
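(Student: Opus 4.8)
The plan is to reduce the statement to a single computation about the \emph{toric correction term} and then carry out that computation case by case. By Theorem~\ref{T:Rohsiepe}, for polar dual three-dimensional reflexive polytopes $\Delta$ and $\Delta^\circ$ one always has $\rk\Pic_\Delta + \rk\Pic_{\Delta^\circ} \geq 20$, and the deficit is measured precisely by the toric correction term
\[
\tau(\Delta) := \sum_{\Gamma} \ell^*(\Gamma)\,\ell^*(\Gamma^\circ),
\]
the sum running over the edges $\Gamma$ of $\Delta$, with $\Gamma^\circ$ the dual edge of $\Delta^\circ$; equality $\rk\Pic_\Delta + \rk\Pic_{\Delta^\circ} = 20$ holds if and only if $\tau(\Delta)=0$. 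Thus the theorem is equivalent to the assertion that \emph{no} invertible deformation $F$ of $f$ has a reflexive Newton polytope $\Delta$ with $\tau(\Delta)=0$. Since $\tau(\Delta)=0$ exactly when no edge $\Gamma$ of $\Delta$ simultaneously satisfies $\ell^*(\Gamma)\geq 1$ and $\ell^*(\Gamma^\circ)\geq 1$, it suffices to exhibit, for every reflexive $\Delta$ arising in this way, one edge of $\Delta$ whose relative interior and whose dual edge's relative interior both contain a lattice point.

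First I would enumerate the invertible deformations. Since an invertible polynomial in four variables is a sum of exactly four monomials whose exponent matrix is invertible and decomposes into disjoint atomic blocks of Fermat, loop, and chain type (\cite{KS}), and since $F$ must deform $f$ and satisfy the Calabi--Yau/anticanonical condition $\sum_j q_j a_{ij} = \sum_j q_j$, the admissible extensions of each three-variable $f$ to a four-variable invertible $F$ form a short, explicit list. For each singularity one records the finitely many ways to adjoin the fourth variable---as a Fermat factor, or by folding it into a chain or loop---that keep the exponent matrix invertible and the hypersurface Calabi--Yau. This is exactly the data assembled from \cite{MU}, so the enumeration can be read off the weight systems $\mbi{w}=(w_0,w_1,w_2,w_3)$ recorded there.

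For each candidate $F$ I would then compute its Newton polytope. Translating the four exponent vectors by $(1,1,1,1)$ places them on the rank-three lattice $\mathbf{M}$ of Section~\ref{S:intersection}, and $\Delta$ is their convex hull together with the induced face structure; reflexivity is checked directly by confirming that the polar dual $\Delta^\circ$ is again a lattice polytope, and any $F$ whose Newton polytope fails to be reflexive is discarded, as such $F$ fall outside the hypothesis. For each surviving reflexive $\Delta$ I would locate an edge $\Gamma$ with $\ell^*(\Gamma)\geq 1$ and verify, using the polar duality between edges of $\Delta$ and edges of $\Delta^\circ$, that $\ell^*(\Gamma^\circ)\geq 1$ as well; concretely this is the intersection bookkeeping encoded in \eqref{SelfIntersection} and \eqref{Intersection}, which detects exactly the edges along which the K3 intersection splits into several components. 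Establishing $\tau(\Delta)>0$ in every instance completes the proof.

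The main obstacle is the completeness of the enumeration in the middle step: one must be certain that the atomic-type classification, together with the requirements that $F$ deform $f$ and cut out a K3 surface, leaves no invertible four-variable polynomial unaccounted for, since a single missed reflexive Newton polytope with vanishing correction term would contradict the theorem. The polytope computations in the final step are routine but voluminous, and are where the cited Mathematica and SageMath code does the heavy lifting; the conceptual content lies entirely in showing that \emph{every} reflexive $\Delta$ produced by these five particular singularities is forced to carry an edge with interior lattice points whose dual edge also has interior lattice points.
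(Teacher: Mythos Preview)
Your proposal is correct and follows essentially the same approach as the paper: reduce the rank condition to the nonvanishing of the toric correction term via Rohsiepe's result, enumerate the finitely many invertible four-variable deformations $F$ of each $f$, discard those whose Newton polytope is not reflexive, and for each remaining reflexive $\Delta$ exhibit an edge $\Gamma$ with $\ell^*(\Gamma)\,\ell^*(\Gamma^\circ)>0$. The paper carries this out explicitly in Section~\ref{S:MainTheorem}, with the same logical structure and the same identification of the critical step as the exhaustive listing of admissible invertible deformations.
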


\subsection{$Z_{13}$ and $J_{3,0}$}\label{S:J30}

The singularity $J_{3,0}$ is defined by $f = x^6 + xy^3 + z^2$. 

First, we choose a basis 
\[
e_1 = (-1,5,-1,-1),\, 
e_2 = (-1,0,2,-1),\, 
e_3 = (-1,-1,-1,1)
\]
of the group
\[
\mathbf{M}:= \left\{ (i,j,k,l)\in\mathbb{Z}^4 \mid i+3j+5k+9l =0  \, {\, \rm and \, }\, k+l\equiv 0 \mod 2\right\}. 
\]
Then, the monomials $X^6$, $XY^3$, and $Z^2$ correspond to the lattice points $(1,0,0)$, $(0,1,0)$, and $(0,0,1)$, respectively. 

On the other hand, for the singularity of type $Z_{13}$ defined by $f^T=x^6y+y^3+z^2$, choose a basis 
\[
e_1'=(-2,1,0,0),\,
e_2'=(-6,0,1,0),\,
e_3'=(-9,0,0,1)
\]
of the group
\[
\mathbf{M}':= \left\{ (i,j,k,l)\in\mathbb{Z}^4 \mid i+2j+6k+9l=0\right\}. 
\]
Then the monomials $X^6Y$, $Y^3$, and $Z^2$ correspond to the lattice points
$(5,0,-1)$, $(-1,2,-1)$, $(-1,-1,1)$, respectively. 


The possible choices of $F$ for $J_{3,0}$ are given by adding the monomial $W^9Z$ or $W^{18}$ to the polynomial $f$, since $F$ should be an invertible polynomial. 

Case (i): In the case that the deformation is $F=X^6 +XY^3 +Z^2 +W^9Z$, the Newton polytope of $F$ is not reflexive. 
Indeed, the Newton polytope of $F$ contains a facet 
\[
\Phi = {\rm Conv}\{ (1,0,0),\, (0,1,0),\, (-1,-3,-4)\}. 
\]
However, the polar dual of the facet $\Phi$ is easily computed to be the non-integral vertex $(-1,\,  -1 ,\, 5/4)\not\in\mathbb{Z}^3$. 

Case (ii): In the case that the deformation is $F=X^6+XY^3+Z^2+W^{18}$, the Newton polytope of $F$ is reflexive. 
Indeed, the Newton polytope of $F$ is the convex hull 
\[
\Delta = {\rm Conv}\left\{ (1,0,0),\, (0,1,0),\, (0,0,1),\, (-2,-6,-9)\right\}. 
\]
Then, the polar dual polytope is given by 
\[
\Delta^\circ = {\rm Conv}\left\{ (8,-1,-1),\, (-1,2,-1),\, (-1,-1,1),\, (-1,-1,-1)\right\}. 
\]
Since $\Delta^\circ$ is an integral polytope, the polytope $\Delta$ is reflexive.  In the \cite{sage} database of reflexive polytopes, $\Delta$ and $\Delta^\circ$ are index 745 and 4282, respectively.  The first author and a collaborator showed in \cite{MU} that $\Delta^\circ$ contains the Newton polytope of $F^T$ as a subpolytope.

Next, we compute the toric contribution $\rk L_0(\Delta)$. 

Let $\Gamma$ be the edge of $\Delta$ given by 
\[
\Gamma = {\rm Conv} \left\{ (0,0,1),\, (-2,-6,-9) \right\}. 
\]
One has its polar dual
\[
\Gamma^\circ = {\rm Conv}\left\{(-1,2,-1),\, (8,-1,-1) \right\}. 
\]
No other edge contributes to $\rk L_0(\Delta)$.  Thus, 
\[
\rk L_0(\Delta) = 2. 
\]

\subsection{\bf $X_{2,0}$ and $S_{17}$}\label{SS:invertibleX20}

The singularity $S_{17}$ is defined by $f = x^6y + y^2z + z^2$. 

First, we choose a basis 
\[
e_1 = (1,1,-1,0),\, 
e_2 = (-6,0,1,1),\, 
e_3 = (-12,0,0,3)
\]
of the group
\[
\mathbf{M}:= \left\{ (i,j,k,l)\in\mathbb{Z}^4 \mid i+j+2k+4l =0 \text{ and } j-i\equiv 0 \mod 3\right\}. 
\]
Then, the monomials $X^6Y$, $Y^2Z$, and $Z^2$ correspond to the lattice points $(5,5,-2)$, $(-1,0,0)$, and $(-1,-2,1)$, 
respectively. 

On the other hand, for the singularity of type $X_{2,0}$ defined by $f^T=x^6+xy^2+yz^2$, choose a basis 
\[
e_1'=(0,5,-1,-1),\,
e_2'=(-1,5,0,-2),\,
e_3'=(0,-2,0,1)
\]
of the group 
\[
\mathbf{M}':= \left\{ (i,j,k,l)\in\mathbb{Z}^4 \mid i+j+3k+2l=0\right\}. 
\]
Then, the monomials $XY^2$, $YZ^2$, and $WX^6$ correspond to the lattice points $(-1,1,0)$, $(0,1,3)$, and $(1,0,0)$,
respectively. 

The only possible choice of $F$ is given by adding the monomial $W^7X$ to the polynomial $f$, since $F$ should be an invertible polynomial. 
Note that this monomial corresponds to the lattice point $(0,-1,0)$. 
The Newton polytope of $F$ is not reflexive. 
Indeed, the Newton polytope of $F$ contains a facet 
\[
\Phi = {\rm Conv}\left\{ (5,5,-2),\, (-1,0,0),\, (0,-1,0)\right\}. 
\]
However, the polar dual of the facet $\Phi$ is easily computed to be the non-integral vertex $(1,\, 1,\, 11/2)\not\in\mathbb{Z}^3$. 

Similarly, the Newton polytope of $F^T$ is not reflexive. 
Indeed, the Newton polytope of $F^T$ contains a facet 
\[
\Phi' = {\rm Conv}\left\{ (-1,1,0), \, (0,1,3), \, (1,0,0)\right\}. 
\]
However, the polar dual of the facet $\Phi'$ is easily computed to be the non-integral vertex $(-1,\ -2,\ 1/3)\not\in\mathbb{Z}^3$.

\subsection{\bf $W_{18}$}\label{SS:invertibleW18}

The singularity $W_{18}$ is defined by $f = x^7 + y^2z + z^2$.  This singularity is dual to $f^T = x^7 + y^2 + yz^2$, another realization of $W_{18}$.

First, we choose a basis 
\[
e_1 = (0,0,2,-1),\, 
e_2 = (1,1,1,-1),\, 
e_3 = (6,-1,0,-1)
\]
of the group
\[
{\mathbf{M}:=} \left\{ (i,j,k,l)\in\mathbb{Z}^4 \mid 3i+4j+7k+14l =0 \right\}. 
\]
Then the monomials $X^7$, $Y^2Z$, and $Z^2$ correspond to the lattice points 
$(-3,5,-1)$, $(1,-1,0)$, and $(0,-1,0)$,
respectively. 

On the other hand, choose a basis 
\[
e'_1 = (-1,-3,1,0),\, 
e'_2 = (1,5,-1,-1),\, 
e'_3 = (1,-1,0,0)
\]
of the group
\[
{\mathbf{M}':=} \left\{ (i,j,k,l)\in\mathbb{Z}^4 \, |\, i+j+4k+2l =0 \right\}. 
\]
In this basis, the monomials $WX^7$, $Y^2$, and $YZ^2$ correspond to the lattice points $(0,1,-1)$, $(2,1,0)$, and $(-1,-1,-1)$
respectively. 

The possible choices of $F$ are given by adding the monomial $W^7Y$ or $W^8X$ to the polynomial $f$, since $F$ should be an invertible polynomial. 

Case (i): 
In the case that the deformation is $F=X^7+Y^2Z+Z^2+W^7Y$, the Newton polytope of $F$ is not reflexive. 
Indeed, the Newton polytope of $F$ contains a facet
\[
\Phi = {\rm Conv}\left\{ (-3,5,-1), \, (1,-1,0),\, (0,0,1)\right\}. 
\]
However, the polar dual of the face $\Phi$ is easily computed to be the non-integral vertex $(-7/2,\, -5/2,\, -1)\not\in\mathbb{Z}^3$. 

Case (ii): 
In the case that the deformation is $F=X^7+Y^2Z+Z^2+W^8X$, the Newton polytope of $F$ is reflexive. 
Indeed, the Newton polytope of $F$ is the convex hull  
\[
\Delta = {\rm Conv}\left\{ (-3,5,-1), \, (1,-1,0),\, (-1,1,1),\, (0,-1,0)\right\}. 
\]
Then, the polar dual polytope is given by 
\[
\Delta^\circ = {\rm Conv}\left\{ (0,1,-2), \, (0,1,6),\, (2,1,0),\, (-4,-3,-2)\right\}. 
\]
Since $\Delta^\circ$ is an integral polytope, the polytope $\Delta$ is reflexive.  In the \cite{sage} database, $\Delta$ and $\Delta^\circ$ are polytopes 9 and 4312, respectively.  

Next, we compute the toric contribution $\rk L_0(\Delta)$. Let $\Gamma$ be the edge of $\Delta$ given by 
\[
\Gamma = {\rm Conv}\left\{ (-1,1,1),\, (-3,5,-1)\right\}. 
\]
The polar dual of $\Gamma$ is
\[
\Gamma^\circ = {\rm Conv}\left\{ (2,1,0),\, (-4,-3,-2)\right\}, 
\]
and 
\[
\ell^*(\Gamma) = \ell^*(\Gamma^\circ) = 1. 
\]
No other edge contributes to $\rk L_0(\Delta)$. 
Thus, 
\[
\rk L_0(\Delta) = 1. 
\]

In this case, the polytope $\Delta^T$ corresponding to $F^T$ has vertices $(0, 1, -1)$, $(2, 1, 0)$, $(-1, -1, -1)$, and $(0,1,6)$.  This polytope is not reflexive.  Furthermore, one may use the normal form for three-dimensional lattice polytopes described in \cite{KS} and implemented in \cite{sage} to check that $\Delta^T$ is not isomorphic to any lattice sub-polytope of $\Delta^\circ$.  Thus, for this choice of $\Delta$, Batyrev's duality does not appear consistent with Berglund-H\"{u}bsch duality.  In \cite{MU}, the first author and her collaborator showed how to choose a reflexive polytope containing $\Delta$ that resolves this discrepancy.  We will study that polytope in more detail in Section~\ref{S:Rohsiepe}.

\subsection{\bf $W_{17}$ and $S_{1,0}$} 

The singularity $S_{1,0}$ is defined by $f = x^5y + y^2z + z^2$.  

First, we choose a basis 
\[
e_1 = (0,5,-1,-1),\, 
e_2 = (-1,4,0,-1),\, 
e_3 = (-1,-1,-1,1)
\]
of the group
\[
{\mathbf{M}:=} \left\{ (i,j,k,l)\in\mathbb{Z}^4 \mid 2i+3j+5k+10l =0 \right\}. 
\]
Then, the monomials $X^5Y$, $Y^2Z$, and $Z^2$ correspond to the lattice points 
$(0,1,0)$, $(-1,1,0)$, and $(0,0,1)$,  
respectively. 

On the other hand, choose a basis 
\[
e'_1 = (4,0,-1,0),\, 
e'_2 = (-6,1,1,0),\, 
e'_3 = (-3,0,0,1)
\]
of the group
\[
{\mathbf{M}':=} \left\{ (i,j,k,l)\in\mathbb{Z}^4 \mid i+2j+4k+3l =0 \right\}. 
\]
Then the monomials $X^5$, $XY^2$, and $YZ^2$ correspond to the lattice points 
$(5,4,-1)$, $(-1,0,-1)$, and $(-1,-1,1)$ 
respectively. 

The only possible choice of $F$ is given by adding the monomial $W^{10}$ to the polynomial $f$, since $F$ should be an invertible polynomial. 
Note that on this side the monomial $W^{10}$ corresponds to the lattice point $(4,-6,-3)$. 
However, with this choice the Newton polytope of $F$ is not reflexive. 
Indeed, the Newton polytope of $F$ contains the facet
\[
\Phi = {\rm Conv}\left\{ (-1,1,0),\, (0,1,0),\, (4,-6,-3)\right\}. 
\]
However, the polar dual of the face $\Phi$ is easily computed to be the non-integral vertex $(
0,\, -1,\, 7/3)\not\in\mathbb{Z}^3$.

\bigskip
We consider whether we can obtain a reflexive Newton polytope by working on the other side, using the polynomial $f^T = X^5 + X Y^2 + Y Z^2$ defining the singularity $W_{17}$. There are two possibilities for a projectivization $G$ of $f^T$, namely, $G=f^T+W^{10}$ or $G=f^T+W^7Z$.
We claim that in both cases, the Newton polytope of $G$ is not reflexive. 
Note that the monomials $W^{10}$ and $W^7Z$ are respectively corresponding to lattice points $(0,-1,-1)$ and $(0,-1,0)$. 

If a projectivization is $G=f^T+W^{10}$, the dual of the facet 
\[
{\rm Conv}\{ (-1,  0, -1),
(-1, -1,  1),
( 0, -1, -1) \} = {\rm Conv}\{XY^2, YZ^2, W^{10} \} 
\]
in the Newton polytope of $G$ is the vertex $1/3(2,2,1)\not\in\mathbb{Z}^3$. 
Thus, the Newton polytope of $G$ is not reflexive.

If a projectivization is $G=f^T+W^7Z$, the dual of the facet
\[
{\rm Conv}\{(-1,0,-1),(5,4,-1),(0,-1,0)\}={\rm Conv}\{XY^2, X^5, W^7Z\}
\]
in the Newton polytope of $G$ is the vertex $1/3(-2, 3, 5)\not\in\mathbb{Z}^3$. 
Thus, the Newton polytope of $G$ is not reflexive.

\subsection{\bf $U_{16}$} 

The singularity $U_{16}$ is defined by $f = x^5 + y^2z + yz^2$. This singularity is self-dual.
First, we choose a basis 
\[
e_1 = (4,-1,-1,0),\, 
e_2 = (4,-1,0,-1),\, 
e_3 = (5,0,-1,-1)
\]
of the group
\[
{\mathbf{M}:=} \left\{ (i,j,k,l)\in\mathbb{Z}^4 \mid 2i+3j+5k+5l =0 \right\}. 
\]
Then the monomials $X^5$, $Y^2Z$, and $YZ^2$ correspond to the lattice points 
$(-2,-2,3)$, $(0,1,-1)$, and $(1,0,-1)$,
respectively. 

On the other hand, choose a basis 
\[
e'_1 = (0,-2,0,1),\, 
e'_2 = (0,-2,1,0),\, 
e'_3 = (1,3,-1,-1)
\]
of the group
\[
{\mathbf{M}':=} \left\{ (i,j,k,l)\in\mathbb{Z}^4 \mid i+j+2k+2l =0 \right\}. 
\]
Then the monomials 
$WX^5$, $Y^2Z$, and $YZ^2$ correspond to the lattice points 
$(-1,-1,0)$, $(-1,0,-1)$, and $(0,-1,-1)$,
respectively. 

The only possible choice of $F$ is given by adding the monomial $W^6X$ to the polynomial $f$ since $F$ should be an invertible polynomial; thus, we have $F=X^5Y + Y^2Z + Z^2 + W^6X$.
Note that the monomial $W^6X$ corresponds to the lattice point $(0,0,1)$. 

The Newton polytope of $F$ is the polytope $\Delta$ given by:
\[
\Delta = {\rm Conv}\left\{  (1,0,-1), \, (0,0,1),\, (-2,-2,3),\,  (0,1,-1) \right\}. 
\]
The polar dual polytope of $\Delta$ is given by
\[
\Delta^\circ = {\rm Conv}\left\{ (-2,-2,-1),\, (-2,1,-1),\, (4,4,5),\, (1,-2,-1)\right\}. 
\]
Since the polytope $\Delta^\circ$ is an integral polytope, the polytope $\Delta$ is reflexive. The index of $\Delta$ in the \cite{sage} database is 1, and the index of $\Delta^\circ$ is 4281.

Next, we compute the toric contribution $\rk L_0(\Delta)$. 

Let $\Gamma$ be the edge in $\Delta$ given by
\[
\Gamma = {\rm Conv}\left\{ (0,0,1),\, (-2,-2,3)\right\}. 
\]
The polar dual edge is
\[
\Gamma^\circ = {\rm Conv}\left\{ (-2,1,-1),\, (1,-2,-1)\right\},
\]
and 
\[
l^*(\Gamma) = 1,\qquad l^*(\Gamma^\circ)=2. 
\]
No other edge contributes to $\rk L_0(\Delta)$. 
Thus, 
\[
\rk L_0(\Delta) = l^*(\Gamma) l^*(\Gamma^\circ)=2. 
\]

\section{Mirror polytopes and mirror lattices}\label{S:Rohsiepe}

In this section, we examine Theorem B:

\begin{thmb} Let $(B',\, B)$ be one of the bimodal singularity mirror pairs
\[
(Z_{13},\, J_{3,0}),\, 
(X_{2,0},\, S_{17}),\, 
(W_{18},\, W_{18}),\, 
(W_{17},\, S_{1,0}),\, 
(U_{16},\, U_{16}),\,
\] 
and let $f$ be the defining equation of $B$. 
Then there exists a reflexive polytope $\Delta$ and an invertible deformation $F$ of $f$ such that the Newton polytope of $F$ is a subpolytope of $\Delta$, the Newton polytope of $F^T$ is a subpolytope of $\Delta^\circ$, and $(\Pic_\Delta)^\perp \cong \TPic{\Delta^\circ}\oplus U$.
\end{thmb}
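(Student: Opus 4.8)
The plan is to let Theorem~\ref{T:Rohsiepe} carry the lattice-theoretic content. That theorem asserts that for \emph{any} polar dual pair of three-dimensional reflexive polytopes one has $(\Pic_\Delta)^\perp \cong \TPic{\Delta^\circ}\oplus U$, which is verbatim the lattice condition demanded in the statement. Consequently, once a three-dimensional reflexive $\Delta$ has been produced, the third requirement is automatic, and the whole theorem collapses to a combinatorial problem: for each of the five pairs, exhibit an invertible deformation $F$ of $f$ and a reflexive polytope $\Delta$ with $\mathrm{Newt}(F)\subseteq\Delta$ and $\mathrm{Newt}(F^T)\subseteq\Delta^\circ$. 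I would open the proof by recording this reduction explicitly, so that everything afterward concerns only inclusions of lattice polytopes.

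The organizing device is the observation that, because polar duality is inclusion-reversing and an involution on polytopes containing the origin in their interior, the condition $\mathrm{Newt}(F^T)\subseteq\Delta^\circ$ is equivalent to $\Delta\subseteq(\mathrm{Newt}(F^T))^\circ$. Finding $\Delta$ therefore means locating a reflexive polytope in the sandwich
\[
\mathrm{Newt}(F)\;\subseteq\;\Delta\;\subseteq\;(\mathrm{Newt}(F^T))^\circ,
\]
so a first necessary check in every case is that $\mathrm{Newt}(F)\subseteq(\mathrm{Newt}(F^T))^\circ$. For this sandwich to even be well posed, I would first pin down the identification of the $F^T$-side lattice $\mathbf{M}'$ with the dual lattice $\mathbf{N}$ of the $F$-side $\mathbf{M}$: only when $\mathrm{Newt}(F^T)$ is regarded as a subset of $N_\mathbb{R}$ does polar duality relate the two Newton polytopes, and both the sandwich and $(\mathrm{Newt}(F^T))^\circ$ then live in $\mathbf{M}_\mathbb{R}$. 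This identification is exactly the passage from Berglund-H\"{u}bsch transposition to the monomial-divisor dictionary reviewed in Section~\ref{S:mirror}, and I would make it explicit using the weight-system descriptions of $\mathbf{M}$ and $\mathbf{M}'$ already fixed in Section~\ref{S:MainTheorem}.

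The remaining work is case-by-case and reuses the polytopes computed in Section~\ref{S:MainTheorem}. For $(Z_{13},J_{3,0})$ the Newton polytope of $F=X^6+XY^3+Z^2+W^{18}$ is already reflexive and $\mathrm{Newt}(F^T)\subseteq\Delta^\circ$ by \cite{MU}, so I would simply take $\Delta=\mathrm{Newt}(F)$; the self-dual pair $(U_{16},U_{16})$ is handled the same way, starting from the reflexive polytope found in Section~\ref{S:MainTheorem} and verifying the single inclusion $\mathrm{Newt}(F^T)\subseteq\Delta^\circ$ directly. The substantive pairs are $(X_{2,0},S_{17})$ and $(W_{17},S_{1,0})$, where Section~\ref{S:MainTheorem} shows the naive Newton polytope is not reflexive, and $(W_{18},W_{18})$, where the Newton polytope of $F=X^7+Y^2Z+Z^2+W^8X$ is reflexive but its dual fails to contain $\mathrm{Newt}(F^T)$. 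In these three cases I would enlarge to a reflexive polytope lying in the sandwich above, following the strategy already used for $W_{18}$ in \cite{MU}, and then confirm both inclusions by reducing to the normal form of \cite{KS} as implemented in \cite{sage} and comparing vertices.

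The main obstacle is precisely this enlargement for the three substantive pairs: one must exhibit a \emph{single} reflexive polytope that is at once large enough to contain $\mathrm{Newt}(F)$ and tight enough that its dual still contains $\mathrm{Newt}(F^T)$, and there is no a priori guarantee that the sandwich admits a reflexive member. Finally, because the introduction advertises an explicit description of the relevant lattices, I would supplement the combinatorial verification with a direct computation of $\Pic_\Delta$ and $\TPic{\Delta^\circ}$ from the toric data, using the self-intersection and intersection formulas \eqref{SelfIntersection}--\eqref{Intersection} together with the linear relations \eqref{ToricLinearRelation}, and reconfirm $(\Pic_\Delta)^\perp\cong\TPic{\Delta^\circ}\oplus U$ independently of Theorem~\ref{T:Rohsiepe} via the discriminant-form criterion of Corollary~\ref{orthogonal}, the embedding criterion of Corollary~\ref{primitive}, and Lemma~\ref{PrimeDet}. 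This direct check is not logically necessary, but it furnishes the explicit lattice data and serves as a cross-verification of the reduction.
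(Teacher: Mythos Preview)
Your proposal is correct and matches the paper's approach: Section~\ref{S:Rohsiepe} proceeds case by case, exhibiting for each pair an explicit reflexive $\Delta$ containing $\mathrm{Newt}(F)$ with $\mathrm{Newt}(F^T)\subseteq\Delta^\circ$ (taking $\Delta=\mathrm{Newt}(F)$ for $J_{3,0}$ and $U_{16}$, and enlarging for the other three, with the $W_{18}$ and $S_{17}$ polytopes turning out to be isomorphic), and then computing $\TPic{\Delta^\circ}$ and $\Pic_\Delta$ directly via the formulas of Section~\ref{S:intersection} and verifying orthogonality through Corollaries~\ref{orthogonal}--\ref{primitive} and Lemma~\ref{PrimeDet}. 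The one difference in emphasis is that the paper treats this direct lattice computation as the proof itself rather than as a supplement to an appeal to Theorem~\ref{T:Rohsiepe}, but your reduction via Rohsiepe is valid and indeed already acknowledged in the introduction.
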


In practice, several different polytopes $\Delta$ may be available. In each case, we describe a specific choice of $\Delta$ and verify that the lattices $\TPic{\Delta^\circ}$ and $U\oplus\Pic_\Delta$ are orthogonal to each other in the $K3$ lattice, using the formulas in Section~\ref{S:intersection}.

\subsection{$Z_{13}$ and $J_{3,0}$} 
The $J_{3,0}$ singularity is defined by  $f=x^6+xy^3+z^2$, and $Z_{13}$ by $f^T=x^6y+y^3+z^2$. 
Recall from Section~\ref{S:J30} that the singularity $J_{3,0}$ is defined by $f = x^6 + xy^3 + z^2$, and may be extended to an invertible deformation  $F=X^6+XY^3+Z^2+W^{18}$ with a reflexive Newton polytope $\Delta$ given by:
\[
\Delta = {\rm Conv}\left\{ (1,0,0),\, (0,1,0),\, (0,0,1),\, (-2,-6,-9)\right\}. 
\]
We label the lattice points on the edges of $\Delta$ as
\[
\begin{matrix}
m_1=(1,0,0), & m_2=(0,1,0), & m_3=(0,0,1), \\
m_4=(-2,-6,-9), & m_5=(0,-2,-3), & m_6=(-1,-4,-6), \\
m_7=(-1,-3,-4).
\end{matrix}.
\]

The polar dual polytope of $\Delta$ is given by
\[
\Delta^\circ = {\rm Conv}\left\{ (8,-1,-1),\, (-1,2,-1),\, (-1,-1,1),\, (-1,-1,-1)\right\}. 
\]

Label the lattice points on the edges of $\Delta^\circ$ by
\[
\begin{matrix}
v_1=(-1,-1,1), & v_2=(-1,2,-1), & v_3=(-1,-1,-1),\\
v_4=(8,-1,-1), & v_5=(-1,-1,0), & v_6=(-1,0,-1),\\ 
v_7=(-1,1,-1), & v_8=(0,-1,-1), & v_9=(1,-1,-1),\\
v_{10}=(2,-1,-1), & v_{11}=(3,-1,-1), & v_{12}=(4,-1,-1),\\
v_{13}=(5,-1,-1), & v_{14}=(6,-1,-1), & v_{15}=(7,-1,-1),\\
v_{16}=(5,0,-1), & v_{17}=(2,1,-1).
\end{matrix}.
\]
 


The Newton polytope of the deformation $F^T=X^6Y+Y^3+Z^2+W^{18}$ of $f^T=x^6y+y^3+z^2$ is a subpolytope of the polar dual $\Delta^\circ$ of $\Delta$. 

As we have seen in the previous section, the toric contribution is $\rk L_0(\Delta)=2$. 
Moreover, one has
\begin{eqnarray*}
\rho_\Delta = 4+13+2-3 = 16, & & 
\rho_{\Delta^\circ} = 4+3+2-3 = 6. 
\end{eqnarray*}

We now compute $\TPic{\Delta^\circ}$. 
The collection $\{ m_5, \, m_6,\, m_7\}$ consists of linearly independent vectors. 
Let $L$ be a lattice that is generated by divisors $D_1,\, D_2,\, D_3$ and $D_4$. 
By taking the new generators $\{ D_1,\, -D_2 + D_3,\, -2 D_1+3 D_2-2 D_3+D_4,\, D_1+D_4\}$, we have
\[
L\simeq U\oplus A_2. 
\]
By a direct computation, we have $\det L = -3$ and $\sign L = (1,3)$.

Note that the intersection matrix of $L$ with respect to the generators $\{D_1,\, D_2,\, D_3,\, D_4\}$ is given by
\[
\begin{pmatrix}
 0 & 2 & 3 & 0 \\
 2 & 6 & 9 & 1 \\
 3 & 9 & 12 & 0 \\
 0 & 1 & 0 & -2 
\end{pmatrix}. 
\]

The lattice $L\simeq U\oplus A_2$ is a primitive sublattice of the $K3$ lattice $\Lambda_{K3}=U^{\oplus 3} \oplus E_8^{\oplus 2}$. 
By definition, the lattice $\TPic{\Delta^\circ}$ is actually equal to the lattice $L$.  
Moreover, since the order of the discriminant group $A_L=L^*\slash L$ coincides with $\vert\det{L}\vert=3$, and the group $A_L$ is a finitely-generated Abelian group, we have $A_L\simeq\mathbb{Z}\slash 3\mathbb{Z}$. 

We compute $\Pic_{\Delta}$. 
The vectors $\{ v_1, \, v_5,\, v_8\}$ are linearly independent. 
Let $L'$ be the lattice that is generated by the divisors 
\[
\mathcal{B}' = \{ V_2,\, V_3,\, V_4,\, V_6,\, V_7,\, V_9,\ldots ,V_{15},\, V_{16}^{(1)},\, V_{16}^{(2)},\, V_{17}^{(1)},\, V_{17}^{(2)} \}. 
\] 
Here, $V_i = V_{i}^{(1)}+V_{i}^{(2)}$ is a restricted toric divisor for $i=17, 18$. 
By a direct computation, we have $\det L' = -3$ and $\sign L' = (1,15)$ with the aid of Mathematica. 

Note that the intersection matrix of $L'$ with respect to the basis $\mathcal{B}'$ is given by 
\[
\begin{pmatrix}
 0 & 0 & 0 & 0 & 1 & 0 & 0 & 0 & 0 & 0 & 0 & 0 & 0 & 0 & 1 & 1 \\
 0 & -2 & 0 & 1 & 0 & 0 & 0 & 0 & 0 & 0 & 0 & 0 & 0 & 0 & 0 & 0 \\
 0 & 0 & -2 & 0 & 0 & 0 & 0 & 0 & 0 & 0 & 0 & 1 & 1 & 1 & 0 & 0 \\
 0 & 1 & 0 & -2 & 1 & 0 & 0 & 0 & 0 & 0 & 0 & 0 & 0 & 0 & 0 & 0 \\
 1 & 0 & 0 & 1 & -2 & 0 & 0 & 0 & 0 & 0 & 0 & 0 & 0 & 0 & 0 & 0 \\
 0 & 0 & 0 & 0 & 0 & -2 & 1 & 0 & 0 & 0 & 0 & 0 & 0 & 0 & 0 & 0 \\
 0 & 0 & 0 & 0 & 0 & 1 & -2 & 1 & 0 & 0 & 0 & 0 & 0 & 0 & 0 & 0 \\
 0 & 0 & 0 & 0 & 0 & 0 & 1 & -2 & 1 & 0 & 0 & 0 & 0 & 0 & 0 & 0 \\
 0 & 0 & 0 & 0 & 0 & 0 & 0 & 1 & -2 & 1 & 0 & 0 & 0 & 0 & 0 & 0 \\
 0 & 0 & 0 & 0 & 0 & 0 & 0 & 0 & 1 & -2 & 1 & 0 & 0 & 0 & 0 & 0 \\
 0 & 0 & 0 & 0 & 0 & 0 & 0 & 0 & 0 & 1 & -2 & 1 & 0 & 0 & 0 & 0 \\
 0 & 0 & 1 & 0 & 0 & 0 & 0 & 0 & 0 & 0 & 1 & -2 & 0 & 0 & 0 & 0 \\
 0 & 0 & 1 & 0 & 0 & 0 & 0 & 0 & 0 & 0 & 0 & 0 & -2 & 0 & 1 & 0 \\
 0 & 0 & 1 & 0 & 0 & 0 & 0 & 0 & 0 & 0 & 0 & 0 & 0 & -2 & 0 & 1 \\
 1 & 0 & 0 & 0 & 0 & 0 & 0 & 0 & 0 & 0 & 0 & 0 & 1 & 0 & -2 & 0 \\
 1 & 0 & 0 & 0 & 0 & 0 & 0 & 0 & 0 & 0 & 0 & 0 & 0 & 1 & 0 & -2 
\end{pmatrix}. 
\]

By Lemma~\ref{PrimeDet}, the lattice $L'$ is a primitive sublattice of the $K3$ lattice $\Lambda_{K3}=U^{\oplus3}\oplus E_8^{\oplus2}$. 
By definition, the lattice $\Pic_{\Delta}$ is actually equal to the lattice $L'$. 

Since $\rk L' > 12$, there exists an even negative-definite lattice $L''$ of rank $14$ and $\det L'' = 3$ such that $L' = U\oplus L''$ holds. 
Moreover, since the order of the discriminant group $A_{L'}=L'^*\slash L'$ coincides with $\vert\det{L'}\vert=3$, and the group $A_{L'}$ is a finitely-generated Abelian group, we have $A_{L'}\simeq\mathbb{Z}\slash 3\mathbb{Z}$. 

We have seen that 
\begin{enumerate}
\item[i)] $\det L = -3 = -\det (U\oplus L')$. 
\item[ii)] $A_L\simeq\mathbb{Z}\slash 3\mathbb{Z} \simeq A_{U\oplus L'}$. 
\end{enumerate}
By Corollary \ref{orthogonal}, we conclude that the lattice $L=\TPic{\Delta^\circ}$ and the lattice $U\oplus L' = U\oplus \Pic_{\Delta}$ are orthogonal to each other in the $K3$ lattice $\Lambda_{K3}$. 

In fact, one finds $L''\simeq E_6\oplus E_8$. 

\subsection{\bf $X_{2,0}$ and $S_{17}$}\label{ML:invertibleX20}
The $S_{17}$ singularity is defined by  $f=x^6y+y^2z+z^2$, and $X_{2,0}$ by $f^T=x^6+xy^2+yz^2$. Recall that the only possible invertible deformation of $S_{17}$ is given by $F= X^6Y + Y^2Z + Z^2+W^7X$, with monomials corresponding to the lattice points $(5,5,-2)$, $(-1,0,0)$, $(-1,-2,1)$, and $(0,-1,0)$. We showed in Section~\ref{SS:invertibleX20} that the Newton polytope of $F$ is not reflexive. We take the larger polytope $\Delta$ given by 
\[
\Delta = {\rm Conv}\left\{ (-1,-2,1),\, (0,-1,0),\, (6,5,-2),\, (5,5,-2),\, (-1,2,-1)\right\}. 
\]
This choice is motivated by the consideration that one should take a reflexive polytope that contains two faces 
\[
\Phi_1 = {\rm Conv}\left\{ (-1,-2,1),\, (-1,2,-1),\, (5,5,-2)\right\},
\]
and 
\[
\Phi_2 = {\rm Conv}\left\{ (-1,-2,1),\, (-1,2,-1),\, (0,-1,0)\right\}. 
\]
By construction, the Newton polytope of $F$ is a subpolytope of $\Delta$. 
We label the lattice points on the edges of $\Delta$ as
\[
\begin{matrix}
m_1=(-1,-2,1), & m_2=(0,-1,0), & m_3=(6,5,-2), \\
m_4=(5,5,-2), & m_5=(-1,2,-1), & m_6=(3,2,-1),\\
m_7=(-1,0,0)
\end{matrix}.
\]

The polar dual polytope of $\Delta$ is given by
\[
\Delta^\circ = {\rm Conv}\left\{ (0,1,3),\, (1,-6,-12),\, (1,1,2),\, (-1,1,0),\, (0,-3,-7) \right\}. 
\]
Since the polytope $\Delta^\circ$ is an integral polytope, the polytope $\Delta$ is reflexive. 
In the \cite{sage} database of reflexive polytopes, $\Delta$ has index 760 and $\Delta^\circ$ has index 3046.
We label the lattice points on the edges of $\Delta^\circ$ as
\[
\begin{matrix}
v_1=(0,1,3), & v_2=(1,1,2), & v_3=(-1,1,0), \\
v_4=(0,-3,-7), & v_5=(1,-6,-12), & v_6=(0,1,1),\\
v_7=(0,-1,-2), & v_8=(1,0,0), & v_9=(1,-1,-2), \\
v_{10}=(1,-2,-4), & v_{11}=(1,-3,-6) , & v_{12}=(1,-4,-8), \\
v_{13}=(1,-5,-10). 
\end{matrix}
\]
The Newton polytope of the deformation $F^T=WX^6 + XY^2 + YZ^2 + W^{7}$ of $f^T=x^6 + xy^2 + yz^2$ is a subpolytope of the polar dual $\Delta^\circ$ of $\Delta$. 

Next, we compute the toric contribution $\rk L_0(\Delta)$. 

Let $\Gamma$ be the edge in $\Delta$ given by
\[
\Gamma = {\rm Conv}\left\{ (-1,-2,1),\, (-1,2,-1)\right\}. 
\]
One has its polar dual
\[
\Gamma^\circ = {\rm Conv}\left\{ (1,1,2),\, (1,-6,-12)\right\}, 
\]
and 
\[
\ell^*(\Gamma) = 1, \qquad l^*(\Gamma^\circ) = 6. 
\]
In fact, no other edge that contributes to $\rk L_0(\Delta)$. 
Thus, 
\[
\rk L_0(\Delta) = 6. 
\]

Moreover, one has
\begin{eqnarray*}
\rho(\Delta) = 5+8+6-3 = 16, & & 
\rho(\Delta^\circ) = 5+2+6-3 = 10. 
\end{eqnarray*}

We now compute $\TPic{\Delta^\circ}$. 
Let $L$ be the lattice that is generated by the divisors $D_2,\, D_5,\, D_6$ and $D_7$, where the divisor $D_i$ corresponds to the lattice point $m_i$.
By taking the new generators $\{ D_2,\, D_2+D_5,\, -D_2-D_5+D_6-D_7,\, D_7-D_2\}$, we have 
\[
L\simeq U\oplus \left(\begin{smallmatrix}-4 & 1 \\ 1 & -2\end{smallmatrix}\right).
\]
By a direct computation, we have $\det L = -7$ and $\sign L = (1,3)$. 

By Lemma~\ref{PrimeDet}, the lattice $L$ is a primitive sublattice of the $K3$ lattice $\Lambda_{K3} =U^{\oplus 3} \oplus E_8^{\oplus 2}$. 
By definition, the lattice $\TPic{\Delta^\circ}$ is actually equal to the lattice $L$.  
Moreover, since the order of the discriminant group $A_L=L^*\slash L$ coincides with $\vert\det{L}\vert=7$, and the group $A_L$ is a finitely-generated Abelian group, we have $A_L\simeq\mathbb{Z}\slash 7\mathbb{Z}$. 

We compute $\Pic_{\Delta}$. 
Let $L'$ be a lattice that is generated by divisors 
\[
\mathcal{B}' = \{ V_1,\, V_3,\, V_4,\, V_5,\, V_8^{(1)}, \ldots ,V_{13}^{(1)},\, V_8^{(2)}, \ldots , V_{13}^{(2)}\}. 
\] 
Here, $V_i = V_{i}^{(1)}+V_{i}^{(2)}$ is a restricted toric divisor for $i=8, \ldots , 13$. 
By a direct computation, we have $\det L' = -7$ and $\sign L' = (1,15)$ with the aid of Mathematica. 

By Lemma~\ref{PrimeDet}, the lattice $L'$ is a primitive sublattice of the $K3$ lattice $\Lambda_{K3}$. 
By definition, the lattice $\Pic_{\Delta}$ is actually equal to the lattice $L'$. 

Since $\rk L' > 12$, there exists an even negative-definite lattice $L''$ of rank $14$ and $\det L'' = 7$ such that $L' = U\oplus L''$ holds. 
Moreover, since the order of the discriminant group $A_{L'}=L'^*\slash L'$ coincides with $\vert\det{L'}\vert=7$, and the group $A_{L'}$ is a finitely-generated Abelian group, we have $A_{L'}\simeq\mathbb{Z}\slash 7\mathbb{Z}$. 

\bigskip
We have seen that 
\begin{enumerate}
\item[i)] $\det L = -7 = -\det (U\oplus L')$. 
\item[ii)] $A_L\simeq\mathbb{Z}\slash 7\mathbb{Z} \simeq A_{U\oplus L'}$. 
\end{enumerate}
By Corollary \ref{orthogonal}, we conclude that the lattice $L=\TPic{\Delta^\circ}$ and the lattice $U\oplus L' = U\oplus \Pic_{\Delta}$ are orthogonal to each other in the $K3$ lattice $\Lambda_{K3}$. 

In fact, using a list of \cite{Nishiyama96}, one finds 
\[
L' = U\oplus A_6\oplus E_8 . 
\]

\subsection{\bf $W_{18}$} 

The singularity $W_{18}$ is defined by $f = x^7 + y^2z + z^2$. There are two possible deformations, $F_1=X^7+Y^2Z+Z^2+W^7Y$ and $F_2=X^7+Y^2Z+Z^2+W^8X$. In Section~\ref{SS:invertibleW18}, we showed that the Newton polytope of $F_1$ is not reflexive, and although the Newton polytope of $F_2$ is reflexive, its polar dual does not contain the Newton polytope of $F_2^T$. Consider the larger lattice polytope $\Delta$ given by 
\[
\Delta = {\rm Conv}\left\{ (-3,5,-1), \, (2,-1,0),\, (-1,1,1),\, (0,-1,0),\, (0,0,1)\right\}. 
\]

The polytope $\Delta$ is reflexive and the Newton polytopes of $F_1$ and of $F_2$ are both subpolytopes of $\Delta$. 
The Newton polytope of the deformation $F_2^T=WX^7 + Y^2 + YZ^2 + W^{8}$ of $f^T=x^7 + y^2z + z^2$ is a subpolytope of the polar dual $\Delta^\circ$ of $\Delta$. 
In the \cite{sage} database of reflexive polytopes, $\Delta$ has index 760 and $\Delta^\circ$ has index 3046. These are precisely the same indices as the pair of polytopes we identified for the $X_{2,0}$ and $S_{17}$ singularity in Section~\ref{ML:invertibleX20}. Thus, the reflexive polytopes are isomorphic, and as before we have lattices given by

\[
\TPic{\Delta^\circ}\simeq U\oplus \left(\begin{smallmatrix}-4 & 1 \\ 1 & -2\end{smallmatrix}\right)
\]

\noindent and 

\[
\Pic{\Delta} = U\oplus A_6\oplus E_8 . 
\]

\subsection{\bf $W_{17}$ and $S_{1,0}$} 
The $S_{1,0}$ singularity is defined by  $f=x^5y+y^2z+z^2$, and $W_{17}$ by $f^T=x^5+xy^2+yz^2$. 
Recall that the only possible invertible deformation of the singularity $S_{1,0}$ is given by $F = X^5Y + Y^2Z + Z^2 + W^{10}$, with monomials corresponding to the lattice points $(0,1,0)$, $(-1,1,0)$, $(0,0,1)$, and $(4,-6,-3)$, respectively. 
As we have seen in the previous section, the Newton polytope $\Delta_F$ of $F$ is not reflexive. 
So, we take the larger reflexive polytopes that have $\Delta_F$ as a subpolytope. 
In fact, there are two such possibilities as we shall see below. 
In the following, for a lattice $L$, we denote by $L_{\mathcal{B}}$ the intersection matrix of $L$ with respect to the set of generators $\mathcal{B}$. 

\noindent
Case (i)\, Let the polytope $\Delta$ be given by 
\[
\Delta = {\rm Conv}\left\{ (0,1,0),\, (0,0,1),\, (2,-2,-1),\, (-2,2,-1),\, (4,-6,-3)\right\}. 
\]

By construction, the Newton polytope of $F$ is a subpolytope of $\Delta$. 
We label the lattice points on the edges of $\Delta$ as 
\[
\begin{matrix}
m_1=(0,1,0), & m_2=(0,0,1), & m_3=(2,-2,-1), \\
m_4=(-2,2,-1), & m_5=(4,-6,-3), & m_6=(3,-4,-2),\\
m_7=(2,-3,-1), & m_8=(1,-1,0), & m_9=(1,-2,-2), \\
m_{10}=(-1,1,0)
\end{matrix}.
\]

The polar dual polytope of $\Delta$ is given by
\[
\Delta^\circ = {\rm Conv}\left\{ (-1,-1,1),\, (-2,-1,-1),\, (0,-1,-1),\, (5,4,-1),\, (-1,0,-1)\right\}. 
\]
We label the lattice points on the edges of $\Delta^\circ$ as 
\[
\begin{matrix}
v_1=(-1,-1,1), & v_2=(5,4,-1), & v_3=(-1,0,-1), \\
v_4=(-2,-1,-1), & v_5=(0,-1,-1), & v_6=(2,2,-1),\\
v_7=(-1,-1,-1), & v_8=(4,3,-1), & v_9=(3,2,-1), \\
v_{10}=(2,1,-1),  & v_{11}=(1,0,-1)
\end{matrix}.
\]

Since the polytope $\Delta^\circ$ is an integral polytope, the polytope $\Delta$ is reflexive. The index of $\Delta$ in the SageMath database is 1959, while the index of $\Delta^\circ$ is 1960. 
The Newton polytope of the deformation $F^T=X^5 + XY^2 + YZ^2 + W^{10}$ of $f^T=x^5 + xy^2 + yz^2$ is a subpolytope of the polar dual $\Delta^\circ$ of $\Delta$. 

Next, we compute the toric contribution $\rk L_0(\Delta)$. 

Let $\Gamma_1$ be an edge in $\Delta$ given by
\[
\Gamma_1 = {\rm Conv}\left\{ (0,0,1),\, (-2,2,-1)\right\}. 
\]
One has its polar dual
\[
\Gamma_1^\circ = {\rm Conv}\left\{ (0,-1,-1),\, (5,4,-1)\right\},
\]
and 
\[
l^*(\Gamma_1) = 1,\qquad l^*(\Gamma_1^\circ)=4. 
\]

Let $\Gamma_2$ be an edge in $\Delta$ given by
\[
\Gamma_2 = {\rm Conv}\left\{ (0,0,1),\, (4,-6,-3)\right\}. 
\]
One has its polar dual
\[
\Gamma_2^\circ = {\rm Conv}\left\{ (-1,0,-1),\, (5,4,-1)\right\},
\]
and 
\[
l^*(\Gamma_2) = 1,\qquad l^*(\Gamma_{2}^\circ)=1. 
\]
In fact, no other edge contributes to $\rk L_0(\Delta)$. 
Thus, 
\[
\rk L_0(\Delta) = l^*(\Gamma_1) l^*(\Gamma_1^\circ) +l^*(\Gamma_2)  l^*(\Gamma_{12}^\circ) =4+1=5. 
\]

Moreover, one has
\begin{eqnarray*}
\rho(\Delta) = 11+5-3 = 13, & & 
\rho(\Delta^\circ) = 10+5-3 = 12. 
\end{eqnarray*}

We now compute $\TPic{\Delta^\circ}$. 
Let $L$ be the lattice that is generated by the divisors $D_1, D_2, D_4, D_5, D_6, D_9, D_{10}$, where the divisor $D_i$ is on a generic hypersurface corresponding to the lattice point $m_i$. 
By a direct computation, we have $\det{L}=20$ and $\sign{L}=(1,\,6)$. 
Note that the lattice $L$ contains primitively the hyperbolic lattice $U\simeq \langle D_1+D_6-D_9,\, D_2-D_9\rangle_{\mathbb{Z}}$.  
By definition, a lattice generated by the restrictions of toric divisors is primitive in the $K3$ lattice $\Lambda_{K3}$. 
We have that the discriminant group $A_L$ of $L$ is given by $A_L\simeq\mathbb{Z}\slash20\mathbb{Z}$ since the element associated to the divisor ${-}D_2 + 2 D_9 + D_{10}$ in $A_L$, is of order $20$. 
Indeed, let $\phi:\mathbb{Z}^7\to\mathbb{Q}^7$ be a map for $\mbi{x}=(x_1,\ldots,x_7)\in\mathbb{Z}^7$, 
\[
\phi(\mbi{x}):=\mbi{x}.L_{\mathcal{B}}^{-1}. 
\]
We can show directly that 
\begin{eqnarray*}
20\cdot\phi(0,\,0,\,1,\,0,\,0,\,0,\,1) & \in & \mathbb{Z}^7, \\
k\cdot\phi(0,\,0,\,1,\,0,\,0,\,0,\,1) & \in & \mathbb{Q}^7\backslash\mathbb{Z}^7, \, k=2,4,5,10. \\
\end{eqnarray*}


We calculate $\Pic_{\Delta}$. 
Let $L'$ be a lattice that is generated by divisors on a generic hypersurface
\[
\mathcal{B}'=\{ V_1, \, V_2, \, V_4, \, V_6^{(1)},\, V_6^{(2)}, \, V_8^{(1)},\, V_9^{(1)},\, V_{10}^{(1)},\, V_{11}^{(1)}, \, V_8^{(2)},\, V_9^{(2)},\, V_{10}^{(2)},\, V_{11}^{(2)} \}. 
\]
Here, $V_i=V_i^{(1)}+V_i^{(2)}$ is a restricted toric divisor for $i=6,8,9,10,11$. 
By a direct computation, we have $\det{L'}=20$ and $\sign{L'}=(1,\,12)$. 
We have  that the discriminant group $A_{L'}$ of $L'$ is given by $A_{L'}\simeq\mathbb{Z}\slash20\mathbb{Z}$ since the element associated to the divisor $V_6^{(1)} + V_8^{(1)}$ in $A_{L'}$, is of order $20$. 
Indeed, let $\psi:\mathbb{Z}^{13}\to\mathbb{Q}^{13}$ be a map for $\mbi{x}=(x_1,\ldots,x_{13})\in\mathbb{Z}^{13}$, 
\[
\psi(\mbi{x}):=\mbi{x}.{L'_{\mathcal{B}'}}^{-1}. 
\]
We can show directly that 
\begin{eqnarray*}
20\cdot\phi(0,\,0,\,0,\,1,\,0,\,1,\,0,\ldots,\,0) & \in & \mathbb{Z}^{13}, \\
k\cdot\phi(0,\,0,\,0,\,1,\,0,\,1,\,0,\ldots,\,0) & \in & \mathbb{Q}^{13}\backslash\mathbb{Z}^{13}, \, k=2,4,5,10. 
\end{eqnarray*}


We claim that the lattice $L'$ is a primitive lattice of the $K3$ lattice $\Lambda_{K3}=U^{\oplus3}\oplus E_8^{\oplus2}$,\, which is of signature $(l_+,\,l_-)=(3,19)$. 
We have $l_+-t_+=3-1=2\geq0$, $l_--t_-=19-12=7\geq0$, and $\rk{\Lambda_{K3}}-\rk{L'}=22-13=9>1=l(A_{L'})$. 
Thus by Corollary \ref{primitive}, the lattice $L'$ is indeed a primitive sublattice of the $K3$ lattice. 

We have seen that 
\begin{enumerate}
\item[i)] $\det{L}=20=-\det{(U\oplus L')}$. 
\item[ii)] $A_{L}\simeq\mathbb{Z}\slash20\mathbb{Z}\simeq A_{L'}$. 
\end{enumerate}

By Corollary \ref{orthogonal}, we conclude that the lattices $L=\TPic{\Delta^*}$ and $U\oplus L' = U\oplus \Pic_{\Delta}$ are orthogonal to each other in the $K3$ lattice $\Lambda_{K3}$. \\

\noindent
Case (ii)\, Let the polytope $\Delta$ be given by

 $\Delta=\Delta_{(2,3,5,10)}={\rm Conv}\left\{ (1,0,0),\, (0,1,0),\, (0,0,1),\, (-2,2,-1),\, (4,-6,-3)\right\}$. \\

By construction, the Newton polytope of $F$ is a subpolytope of $\Delta$. 
We label the lattice points on the edges of $\Delta$ as 
\[
\begin{matrix}
m_1=(0,1,0), & m_2=(0,0,1), & m_3=(1,0,0), \\
m_4=(-2,2,-1), & m_5=(4,-6,-3), & m_6=(3,-4,-2),\\
m_7=(2,-2,-1), & m_8=(2,-3,-1), & m_9=(1,-2,-2), \\
m_{10}=(-1,1,0)
\end{matrix}.
\]

The polar dual polytope of $\Delta$ is given by
\[
\Delta^\circ = {\rm Conv}\left\{ (-1,-1,1),\, (-1,-1,-1),\, (0,-1,-1),\, (5,4,-1),\, (-1,0,-1)\right\}. 
\]
We label the lattice points on the edges of $\Delta^\circ$ as 
\[
\begin{matrix}
v_1=(-1,-1,1), & v_2=(5,4,-1), & v_3=(-1,0,-1), \\
v_4=(-1,-1,-1), & v_5=(0,-1,-1), & v_6=(2,2,-1),\\
v_7=(-1,-1,0), & v_8=(4,3,-1), & v_9=(3,2,-1), \\
v_{10}=(2,1,-1),  & v_{11}=(1,0,-1)
\end{matrix}.
\]

Since the polytope $\Delta^\circ$ is an integral polytope, the polytope $\Delta$ is reflexive. 
The Newton polytope of the deformation $F^T=X^5 + XY^2 + YZ^2 + W^{10}$ of $f^T=x^5 + xy^2 + yz^2$ is a subpolytope of the polar dual $\Delta^\circ$ of $\Delta$.

The edges $\Gamma_1$ and $\Gamma_2$ defined in Case (i) are also edges of $\Delta$ and there is no other contribution to $\rk L_0(\Delta)$. 
Therefore, we have $\rk L_0(\Delta)=5$. 

Moreover, one has
\begin{eqnarray*}
\rho(\Delta) = 11+5-3 = 13, & & 
\rho(\Delta^\circ) = 10+5-3 = 12. 
\end{eqnarray*}

We now compute $\TPic{\Delta^\circ}$. 
Let $L$ be the lattice that is generated by the divisors $D_1, D_2, D_3, D_7, D_6, D_5, D_9$, where the divisor $D_i$ is on a generic hypersurface corresponding to the lattice point $m_i$. 
By a direct computation, we have $\det{L}=20$ and $\sign{L}=(1,\,6)$. 
Note that the lattice $L$ contains primitively the hyperbolic lattice $U\simeq\langle D_1+D_6-D_7,\, D_1+D_2-2D_3\rangle_{\mathbb{Z}}$. 

By definition, a lattice generated by the restrictions of toric divisors is primitive in the $K3$ lattice $\Lambda_{K3}$. 
We have that the discriminant group $A_L$ of $L$ is given by $A_L\simeq\mathbb{Z}\slash20\mathbb{Z}$ since the element associated to the divisor $D_6 + D_7$ in $A_L$ is of order $20$. 
Indeed, let $\phi:\mathbb{Z}^7\to\mathbb{Q}^7$ be a map for $\mbi{x}=(x_1,\ldots,x_7)\in\mathbb{Z}^7$, 
\[
\phi(\mbi{x}):=\mbi{x}.L_{\mathcal{B}}^{-1}. 
\]
We can show directly that 
\begin{eqnarray*}
20\cdot\phi(0,\,0,\,0,\,0,\,1,\,0,\,0) & \in & \mathbb{Z}^7, \\
k\cdot\phi(0,\,0,\,0,\,0,\,1,\,0,\,0) & \in & \mathbb{Q}^7\backslash\mathbb{Z}^7, \, k=2,4,5,10. \\
\end{eqnarray*}

Note that the lattices $L$ in Cases (i) and (ii) are isometric. 

We calculate $\Pic_{\Delta}$. 
Let $L'$ be a lattice that is generated by divisors on a generic hypersurface
\[
\mathcal{B}'=\{ V_1, \, V_2, \, V_4, \, V_6^{(1)},\, V_6^{(2)}, \, V_8^{(1)},\, V_9^{(1)},\, V_{10}^{(1)},\, V_{11}^{(1)}, \, V_8^{(2)},\, V_9^{(2)},\, V_{10}^{(2)},\, V_{11}^{(2)} \}. 
\]

Here, $V_i=V_i^{(1)}+V_i^{(2)}$ is a restricted toric divisor for $i=6,8,9,10,11$. 

By a direct computation, we have $\det{L'}=20$ and $\sign{L'}=(1,\,12)$. 
We have  that the discriminant group $A_{L'}$ of $L'$ is given by $A_{L'}\simeq\mathbb{Z}\slash20\mathbb{Z}$ since the element associated to the divisor $V_6^{(1)} + V_8^{(1)}$ in $A_{L'}$ is of order $20$. 
Indeed, let $\psi:\mathbb{Z}^{13}\to\mathbb{Q}^{13}$ be a map for $\mbi{x}=(x_1,\ldots,x_{13})\in\mathbb{Z}^{13}$, 
\[
\psi(\mbi{x}):=\mbi{x}.{L'_{\mathcal{B}'}}^{-1}. 
\]
We can show directly that 
\begin{eqnarray*}
20\cdot\phi(0,\,0,\,0,\,1,\,0,\,1,\,0,\ldots,\,0) & \in & \mathbb{Z}^{13}, \\
k\cdot\phi(0,\,0,\,0,\,1,\,0,\,1,\,0,\ldots,\,0) & \in & \mathbb{Q}^{13}\backslash\mathbb{Z}^{13}, \, k=2,4,5,10. 
\end{eqnarray*}

We claim that the lattice $L'$ is a primitive lattice of the $K3$ lattice $\Lambda_{K3}=U^{\oplus3}\oplus E_8^{\oplus2}$, which is of signature $(l_+,\,l_-)=(3,19)$. 
We have $l_+-t_+=3-1=2\geq0$, $l_--t_-=19-12=7\geq0$ and $\rk{\Lambda_{K3}}-\rk{L}=22-13=9>1=l(A_{L})$. 
By Corollary \ref{primitive}, the lattice $L'$ is indeed a primitive sublattice of the $K3$ lattice.

We have seen that 
\begin{enumerate}
\item[i)] $\det{L}=20=-\det{(U\oplus L')}$. 
\item[ii)] $A_{L}\simeq\mathbb{Z}\slash20\mathbb{Z}\simeq A_{L'}$. 
\end{enumerate}

By Corollary \ref{orthogonal}, we conclude that the lattice $L=\TPic{\Delta^\circ}$ and the lattice $U\oplus L' = U\oplus \Pic_{\Delta}$ are orthogonal to each other in the $K3$ lattice $\Lambda_{K3}$. 

We can easily observe that in cases (i) and (ii), although the polytopes are not isomorphic, but the lattices $\TPic{\Delta^\circ}$ and $\Pic{\Delta}$ are respectively isometric. 

%
%
%
%
%

\subsection{\bf $U_{16}$}

The singularity $U_{16}$ is defined by $f = x^5 + y^2z + yz^2$, and consider a deformation $F=X^5 + Y^2Z + YZ^2 + W^6X$, and let the polytope $\Delta$ be given by 
\[
\Delta:=\Delta_F = {\rm Conv}\left\{  (1,0,-1), \, (0,0,1),\, (-2,-2,3),\,  (0,1,-1) \right\}. 
\]
By construction, the Newton polytope of $F$ is a subpolytope of $\Delta$. 
We label the lattice points on the edges of $\Delta$ as 
\[
\begin{matrix}
m_1 = (1,0,-1), & m_2=(0,0,1), & m_3=(-2,-2,3), \\
m_4=(0,1,-1), & m_5=(-1,-1,2). 
\end{matrix}
\]

The polar dual poltope of $\Delta$ is given by 
\[
\Delta^\circ = {\rm Conv}\left\{(-2,-2,-1),\, (-2,1,-1),\, (4,4,5),\, (1,-2,-1) \right\}. 
\]
We label the lattice points on the edges of $\Delta^\circ$ as 
\[
\begin{matrix}
v_1 = (-2,-2,-1), & v_2 = (-2,1,-1), & v_3 = (4,4,5), & v_4=(1,-2,-1), \\
v_5 = (-2,-1,-1), & v_6 = (-2,0,-1), & v_7 = (0,2,1), & v_8=(2,3,3), \\
v_9 = (-1,-2,-1), & v_{10} = (0,-2,-1), & v_{11} = (3,2,3), & v_{12}=(2,0,1), \\
v_{13} = (-1,-1,0), & v_{14} = (0,0,1), & v_{15} = (1,1,2), & v_{16}=(2,2,3), \\
v_{17} = (3,3,4), & v_{18} = (-1,0,-1), & v_{19} = (0,-1,-1)
\end{matrix}. 
\]
Since the polytope $\Delta^\circ$ is an integral polytope, the polytope $\Delta$ is reflexive. 
The Newton polytope of the deformation $F^T=WX^5 + Y^2Z + YZ^2 + W^6$ of $f^T=x^5 + y^2z + yz^2$ is a subpolytope of the polar dual $\Delta^\circ$ of $\Delta$. 
As we have seen in the previous section, the toric contribution is $\rk L_0(\Delta) = 2$. 


Moreover, one has
\begin{eqnarray*}
\rho(\Delta)  = 19+2-3 = 18, & & 
\rho(\Delta^\circ)  = 5+2-3 = 4. 
\end{eqnarray*}

We compute $\TPic{\Delta^\circ}$. 
A collection $\{ m_1, \, m_2,\, m_5\}$ is a linearly-independent vectors. 
Let $L$ be a lattice that is generated by divisors $D_3$ and $D_4$.  
By taking the new generators $\{ D_3,\, -D_3+D_4\}$, we have $L\simeq U(3)$. 
It is known that the discriminant group of $U(3)$ is isomorphic to $\mathbb{Z}\slash 9\mathbb{Z}$. 
By definition, the lattice $\TPic{\Delta^\circ}$ is actually equal to the lattice $L$.  

We compute $\Pic_{\Delta}$. 
A collection $\{ v_1, \, v_5,\, v_9\}$ is a linearly-independent vectors. 
Let $L'$ be a lattice that is generated by divisors 
\[
\mathcal{B}' = \{ V_2,\, V_3,\, V_4,\, V_6,\, V_7,\, V_8,\, V_{10},\ldots ,V_{17},\, V_{18}^{(1)},\, V_{18}^{(2)},\, V_{19}^{(1)},\, V_{19}^{(2)} \}. 
\] 
Here, $V_i = V_{i}^{(1)}+V_{i}^{(2)}$ is a restricted toric divisor for $i=18, 19$. 

By a direct computation, we have $\det L' = -9$ and $\sign L' = (1,17)$ with the aid of Mathematica. 

Since $\rk L' > 12$, there exists an even negative-definite lattice $L''$ of rank $16$ and $\det L'' = 9$ such that $L' = U\oplus L''$ holds. 
Moreover, set 
${L'}_{\mathbb{B'}}$ to be the intersection matrix of $L'$ with respect to the basis $\mathcal{B}'$, and by defining a map $\phi : \mathbb{Z}^{18}\to\mathbb{Q}^{18}$ 
\[
\phi(x_1, \ldots, x_{18} ; s) := s \cdot( x_1 \, \cdots \, x_{18} ) .{L'}_{\mathbb{B'}}^{-1}, 
\]
and one can see that 
\[
\phi(x_1, \ldots, x_{18} ; 9)\in\mathbb{Z}. 
\]

This means that there exists an element of order $9$ in the discriminant group of $L'$. 
Therefore, $L'^* \slash L' \simeq \mathbb{Z}\slash 9\mathbb{Z}$, so $L'^* \slash L' \simeq (L\oplus U)^* \slash (L\oplus U)$. 
Thus, the lattice $L'$ is also a primitive sublattice of $\Lambda_{K3}$, and $L'\simeq (L\oplus U)^\perp$ holds. 

\bigskip
We have seen that 
\begin{enumerate}
\item[i)] $\det L = -9 = -\det (U\oplus L')$. 
\item[ii)] $A_L\simeq\mathbb{Z}\slash 9\mathbb{Z} \simeq A_{U\oplus L'}$. 
\end{enumerate}
By Corollary \ref{orthogonal}, we conclude that the lattices $L=\TPic{\Delta^\circ}$ and $U\oplus L' = U\oplus \Pic_{\Delta}$ are orthogonal to each other in the $K3$ lattice $\Lambda_{K3}$. 
\QED

\hfill Makiko Mase, \\

\hfill University of Mannheim, \\

\hfill mtmase@arion.ocn.ne.jp \\

\hfill Ursula Whitcher, \\

\hfill Mathematical Reviews (American Mathematical Society)\\

\hfill uaw@umich.edu\\

\end{document}